\DeclareFontFamily{U}{matha}{\hyphenchar\font45}
\DeclareFontShape{U}{matha}{m}{n}{
      <5> <6> <7> <8> <9> <10> gen * matha
      <10.95> matha10 <12> <14.4> <17.28> <20.74> <24.88> matha12
      }{}
\DeclareSymbolFont{matha}{U}{matha}{m}{n}
\DeclareMathSymbol{\Lt}{3}{matha}{"CE}
\DeclareFontFamily{U}{mathx}{\hyphenchar\font45}
\DeclareFontShape{U}{mathx}{m}{n}{
      <5> <6> <7> <8> <9> <10>
      <10.95> <12> <14.4> <17.28> <20.74> <24.88>
      mathx10
      }{}
\newtheorem{theorem}{Theorem}[section]
\newtheorem*{theoremA}{Theorem A}
\newtheorem*{theoremB}{Theorem B}
\newtheorem*{theoremC}{Theorem C}
\newtheorem{lemma}[theorem]{Lemma}
\newtheorem{corollary}[theorem]{Corollary}
\newtheorem{fact}[theorem]{Fact}
\newtheorem{claim}[theorem]{Claim}
\theoremstyle{remark}
\newtheorem{remark}[theorem]{Remark}
\theoremstyle{definition}
\newtheorem{definition}[theorem]{Definition}
\newtheorem{problem}[theorem]{Problem}
\numberwithin{equation}{section}
\newcommand{\nn}[1]{{\left\vert\kern-0.25ex\left\vert\kern-0.25ex\left\vert #1
\right\vert\kern-0.25ex\right\vert\kern-0.25ex\right\vert}}
\renewcommand{\leq}{\leqslant}
\renewcommand{\geq}{\geqslant}
\newcounter{smallromans}
\newenvironment{romanenumerate}
{\begin{list}{{\normalfont\textrm{(\roman{smallromans})}}}%
  {\usecounter{smallromans}\setlength{\itemindent}{0cm}%
   \setlength{\leftmargin}{5.5ex}\setlength{\labelwidth}{5.5ex}%
   \setlength{\topsep}{.5ex}\setlength{\partopsep}{.5ex}%
   \setlength{\itemsep}{0.1ex}}}%
{\end{list}}
\newcommand{\R}{\mathbb{R}}
\newcommand{\N}{\mathbb{N}}
\newcommand{\e}{\varepsilon}
\newcommand{\p}{\varphi}
\newcommand{\flag}{\mathord{\upharpoonright}}
\newcounter{smallromansdash}
\newcounter{bigromans} 
{\end{list}}
\begin{document}
\title[On densely isomorphic normed spaces]{On densely isomorphic normed spaces}

\author[P.~H\'ajek]{Petr H\'ajek}
\address[P.~H\'ajek]{Department of Mathematics\\Faculty of Electrical Engineering\\Czech Technical University in Prague\\Technick\'a 2, 166 27 Praha 6\\ Czech Republic}
\email{hajek@math.cas.cz}

\author[T.~Russo]{Tommaso Russo}
\address[T.~Russo]{Department of Mathematics\\Faculty of Electrical Engineering\\Czech Technical University in Prague\\Technick\'a 2, 166 27 Praha 6\\ Czech Republic}
\email{russotom@fel.cvut.cz}

\thanks{Research of the first-named author was supported in part by OPVVV CAAS CZ.02.1.01/0.0/0.0/16$\_$019/0000778.
Research of the second-named author was supported by the project International Mobility of Researchers in CTU CZ.02.2.69/0.0/0.0/16$\_$027/0008465 and by Gruppo Nazionale per l'Analisi Matematica, la Probabilit\`a e le loro Applicazioni (GNAMPA) of Istituto Nazionale di Alta Matematica (INdAM), Italy.}

\keywords{Normed spaces, dense subspaces, densely isomorphic normed spaces, biorthogonal systems in normed spaces}
\subjclass[2010]{46B20, 46B26 (primary), and 46A35, 46C05 (secondary)}
\date{\today}

\begin{abstract} In the first part of our note we prove that every Weakly Lindel\"of Determined (WLD) (in particular, every reflexive) non-separable Banach $X$ space contains two dense linear subspaces $Y$ and $Z$ that are not densely isomorphic. This means that there are no further dense linear subspaces $Y_0$ and $Z_0$ of $Y$ and $Z$ which are linearly isomorphic. 

Our main result (Theorem B) concerns the existence of biorthogonal systems in normed spaces. In particular, we prove under the Continuum Hypothesis (CH) that there exists a dense linear subspace of $\ell_2(\omega_1)$  (or more generally every WLD space of density $\omega_1$) which contains no uncountable biorthogonal system. This result lies between two fundamental results concerning biorthogonal systems, namely the construction of Kunen (under CH) of a non-separable Banach space which contains no uncountable biorthogonal system, and the construction of Todor\u{c}evi\'c (under Martin Maximum) of an uncountable biorthogonal system in every non-separable Banach space. 
\end{abstract}
\maketitle

\section{Introduction}
Although most research in Banach space theory focuses on the study of complete normed spaces, incomplete normed spaces play an important r\^{o}le in several instances in Functional Analysis, most notably in the study of spaces of continuous, $C^1$-smooth, or compactly supported, functions on $\R^n$, with integral norm. Just to name some directions of research, let us mention: inner product spaces \cite{Gudder, Gudder2}, norm attainment \cite{James, Phelps, BP}, supporting functionals \cite{Fonf1, Fonf2}, tilings \cite{FPZ}, smoothness \cite{Vanderwerff, hajek-locally,Johanis}, diffeomorphisms \cite{Bessaga, Dobrowolski, Azagra}.\smallskip

Our aim in this paper will be to study normed spaces with a prescribed completion, or, equivalently, to study dense subspaces of a given Banach space. We will be interested in understanding how much some structural assumptions on a Banach space influence its dense subspaces. Moreover, given a certain property of some dense subspace of a Banach space, we wish to know if such property is inherited by other dense subspaces of the same Banach space. From this perspective, the following heuristic question arises naturally.
\begin{center}\emph{How different can two dense subspaces of a Banach space be?}
\end{center}

A standard perturbation argument shows that in every separable Banach space there is a canonical (smallest) dense subspace, that is densely contained in every other dense subspace. More precisely, if $\{e_j;e^*_j\}_{j=1}^\infty$ is an M-basis for a separable Banach space $X$, every dense subspace of $X$ contains a dense subspace isomorphic to ${\rm span}\{e_j\} _{j=1}^\infty$ (see Theorem \ref{Th: densely for separable}). Moreover, a similar result is also valid in $\ell_1(\Gamma)$, for every set $\Gamma$ (Theorem \ref{Th: densely for ell1}). In our note we show that this feature breaks down completely in many non-separable Banach spaces, notably even in Hilbert spaces.\smallskip

In fact, our original interest in this topic comes from the problem of the existence of smooth norms on dense subspaces (compare \cite{Vanderwerff, hajek-locally} and \cite[Problems 148, 149]{GMZ}). It was shown in \cite{hajek-locally} that every separable Banach space admits a dense subspace with $C^\infty$-smooth renorming---in sharp contrast with the space itself. In our joint paper with Sheldon Dantas \cite{DHR} we prove similar results for certain non-separable Banach spaces, \emph{e.g.}, those having an unconditional basis. However, our proofs rely on the structural properties of the selected subspaces, and do not work for a general dense subspace. In view of our main results below, it is in fact conceivable that while, \emph{e.g.}, $\ell_\infty$ has a dense subspace $X$ admitting an analytic norm, it may perhaps have another dense subspace $Y$ whose every further dense subspace fails to have a smooth renorming. The same situation in fact occurs with other properties, such as the existence of an uncountable biorthogonal system, as we show below. Our paper can be viewed as the first step towards a systematic study of dense subspaces of a given (non-separable) Banach or normed space with respect to various structural properties.\smallskip

To this end, we introduce the following definition, that will be the cardinal notion of the paper.
\begin{definition} Two normed spaces $X$ and $Y$ are said to be \emph{densely isomorphic} if there exist dense subspaces $X_0$ of $X$ and $Y_0$ of $Y$ such that $X_0$ and $Y_0$ are linearly isomorphic.
\end{definition}

The obvious fact that linear isomorphisms between normed spaces extend to isomorphisms between their completions implies that densely isomorphic normed spaces have isomorphic completions. In particular, two Banach spaces are densely isomorphic precisely when they are isomorphic. This naturally leads us to the following problem, a formal restatement of the heuristic question above.
\begin{problem}\label{Problem} Let $Y$ and $Z$ be dense subspaces of a Banach space $X$. Must $Y$ and $Z$ be densely isomorphic?
\end{problem}

Theorem \ref{Th: densely for separable} and Theorem \ref{Th: densely for ell1} below yield a positive answer for separable Banach spaces and for the spaces $\ell_1(\Gamma)$. On the other hand, our first main result shows that for a substantial class of non-separable Banach spaces the situation is different:
\begin{theoremA} Every non-separable WLD Banach space contains two dense subspaces that are not densely isomorphic.
\end{theoremA}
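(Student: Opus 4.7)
The starting point is the reformulation that two dense subspaces $Y, Z$ of $X$ are densely isomorphic if and only if there exists a linear automorphism $T \colon X \to X$ with $T(Y) \cap Z$ dense in $X$; indeed, a bounded isomorphism between dense subspaces extends uniquely to one between the completions, both of which equal $X$. So the plan is to produce dense $Y, Z$ in $X$ for which $T(Y) \cap Z$ is not dense for every $T \in \mathrm{GL}(X)$.

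First I would reduce to the case of density character exactly $\omega_1$: every WLD Banach space admits a projectional resolution of the identity, yielding a norm-one projection $P$ onto a WLD subspace $X_1 \subseteq X$ of density $\omega_1$. If $Y_1, Z_1 \subseteq X_1$ are dense and not densely isomorphic, then $Y := Y_1 \oplus \ker P$ and $Z := Z_1 \oplus \ker P$ are dense in $X$, and a hypothetical dense iso $Y \cong Z$ can be composed with $P$ to produce a dense iso $Y_1 \cong Z_1$, a contradiction. I now fix an M-basis $\{e_\alpha; e^*_\alpha\}_{\alpha<\omega_1}$ of $X$ (using that WLD gives the countable support property $|\{\alpha : e^*_\alpha(x)\neq 0\}| \leq \aleph_0$), and set $Y := \Span\{e_\alpha : \alpha<\omega_1\}$, the canonical finite-support dense subspace, which carries an uncountable bounded biorthogonal system inherited from the M-basis.

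The construction of $Z$ is the core step. I would build $Z$ by transfinite recursion of length $\omega_1$, adding vectors at each stage so that $Z$ remains dense while acquiring a rigidity property that prevents $T(Y) \cap Z$ from being dense for every $T \in \mathrm{GL}(X)$. The main obstacle is the design of this rigidity: because $|\mathrm{GL}(X)|$ may vastly exceed $\omega_1$, one cannot diagonalize over automorphisms one by one, so the strategy must isolate a single structural invariant---preserved under isomorphism and by passage to dense subspaces---that holds throughout the dense subspaces of $Y$ but that $Z$ can be engineered to violate. A natural candidate is some ZFC-provable weakening of the ``no uncountable biorthogonal system'' property driving the CH-dependent Theorem B, chosen so that the construction of $Z$ goes through without additional set-theoretic hypotheses.
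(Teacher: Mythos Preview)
Your proposal correctly isolates the strategic shape of the argument---find an isomorphism-invariant property that one dense subspace enjoys and the other cannot---but it stops exactly where the proof begins. You set $Y=\Span\{e_\alpha\}$ and then say you would build $Z$ by transfinite recursion towards ``some ZFC-provable weakening'' of the no-uncountable-biorthogonal-system property, without naming that weakening or explaining how the recursion would secure it. That missing invariant is the entire content of the theorem. The paper's choice is: \emph{admitting a countable family of functionals that separates points}. Your $\Span\{e_\alpha\}$ (which is the paper's $Z$) indeed has the feature that no non-separable subspace admits such a family---this is a short $\Delta$-system argument using countable supports. The substantial work is constructing the \emph{other} dense subspace (the paper's $Y$), one on which the fixed countable family $(e^*_j)_{j<\omega}$ already separates points. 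This is done explicitly, not by recursion over $\mathrm{GL}(X)$: one sets $\tilde{e}_\alpha=e_\alpha+\sum_{j<\omega}(q_\alpha)^j e_j$ for an injective family $(q_\alpha)\subseteq(0,1/2)$; the Vandermonde structure gives the separation, while density comes from Klee's overcomplete-sequence lemma together with the fact that $0$ lies in the closed convex hull of any uncountable subfamily of an M-basis in a WLD space.

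Your reduction to density $\omega_1$ also has a gap. From a dense isomorphism between $Y=Y_1\oplus\ker P$ and $Z=Z_1\oplus\ker P$ you obtain some $T\in\mathrm{GL}(X)$, but $PT|_{X_1}$ need not be an automorphism of $X_1$, so ``composing with $P$'' does not hand you an element of $\mathrm{GL}(X_1)$ witnessing that $Y_1$ and $Z_1$ are densely isomorphic. The paper's reduction is asymmetric and avoids this: it keeps $Z=\Span\{e_\alpha\}_{\alpha<\Gamma}$ for the full index set, uses the complemented $X_0$ of density $\omega_1$ only to manufacture the separating-sequence subspace $\tilde{Y}$, sets $Y=\Span(\tilde{Y}\cup\ker P)$, and then argues directly with the invariant: for any dense $Y_0\subseteq Y$, the image $P(Y_0)\subseteq\tilde{Y}$ is non-separable and separated by countably many functionals, so it cannot be carried by any isomorphism into $Z$.
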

 A striking particular case of the result is that every non-separable Hilbert space contains two dense subspaces that are not densely isomorphic. The proof of Theorem A will be given in Section \ref{Sec: Th A} (while the definition of WLD Banach space will be briefly recalled in Section \ref{Sec: WLD}). In the particular case when the Banach space $X$ under consideration satisfies $\omega_1\leq{\rm dens}\,X\leq\mathfrak{c}$, we actually prove, in Theorem \ref{th: WLD small}, the following stronger result: there exist two dense subspaces $Y$ and $Z$ of $X$ whose every non-separable subspaces are non-isomorphic. Once more, in the case of the Hilbert space $\ell_2(\omega_1)$, this shows that, although the structure of closed subspaces of Hilbert spaces is extremely rigid, its dense subspaces can be quite diverse.\smallskip

Subsequently, we present an alternative approach to Theorem A, by proving the following more precise result, under the assumption of the Continuum Hypothesis.
\begin{theoremB}[CH] Every WLD Banach space of density character $\omega_1$ contains a dense subspace with no uncountable biorthogonal system.
\end{theoremB}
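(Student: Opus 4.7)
The plan is to carry out a transfinite recursion of length $\omega_1$ that constructs vectors $y_\alpha \in X$ so that $Y := \Span\{y_\alpha : \alpha < \omega_1\}$ is dense in $X$ while systematically killing every potential uncountable biorthogonal system. Since $X$ is WLD of density $\omega_1$, first fix an M-basis $\{e_\alpha ; e^*_\alpha\}_{\alpha<\omega_1}$ with respect to which every $x \in X$ and every $x^* \in X^*$ has countable support; combined with CH, this forces $|X|=|X^*|=\omega_1$. Also fix an enumeration $(z_\alpha)_{\alpha<\omega_1}$ of a dense subset of $X$ in which each entry appears cofinally often.

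The first key step is a structural reduction. Assume toward contradiction that the constructed dense subspace $Y$ carries an uncountable biorthogonal system $\{(v_\alpha,x^*_\alpha)\}_{\alpha<\omega_1}$; since $Y$ is dense, $Y^*$ is naturally identified with $X^*$, so each $x^*_\alpha$ lies in $X^*$. Each $v_\alpha$ has a finite support $F_\alpha \subseteq \omega_1$ in the generators $\{y_\beta\}$, and each $x^*_\alpha$ has a countable support $S_\alpha \subseteq \omega_1$ in the M-basis. A double $\Delta$-system lemma, together with pigeonholing on finitely many rational coordinate approximations, yields an uncountable $\Gamma \subseteq \omega_1$ and countable sets $F^*, R^* \subseteq \omega_1$ such that for distinct $\alpha,\beta \in \Gamma$ one has $F_\alpha \cap F_\beta = F^*$ and $S_\alpha \cap S_\beta = R^*$, with the $F^*$-coefficients of $v_\alpha$ and the restrictions $x^*_\alpha|_{\Span\{e_\gamma : \gamma \in R^*\}}$ all independent of $\alpha \in \Gamma$. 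Thus every potential uncountable biorthogonal system is, after thinning, encoded by a countable \emph{normalized type}; by CH there are only $\omega_1$ such types, which we enumerate as $(T_\gamma)_{\gamma<\omega_1}$.

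The recursion is then organized as follows. At stage $\alpha<\omega_1$, having chosen $\{y_\beta\}_{\beta<\alpha}$, select $y_\alpha$ to accomplish simultaneously the density requirement $\|y_\alpha - z_\alpha\| < 2^{-\alpha}$ and a killing constraint relative to $T_\alpha$: the $R^*_\alpha$-part of the would-be wing vectors of a biorthogonal system of type $T_\alpha$ is entirely prescribed by the type, so we force the $R^*_\alpha$-projection of $y_\alpha$ to differ from every admissible value, while allocating the tail of $y_\alpha$ (supported on ordinals outside $R^*_\alpha$ and bounded below by $\alpha$) to meet the density requirement. Since the M-basis provides linear independence and the corrections can be taken arbitrarily small, both constraints can be satisfied simultaneously at every stage.

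The hard part is the reflection argument. Given an actual uncountable biorthogonal system in the final $Y$, one must locate a stage $\gamma<\omega_1$ at which the enumeration catches its normalized type $T_\gamma$ and at which the earlier data $\{y_\beta\}_{\beta<\gamma}$ is already rich enough that the killing step at $\gamma$ cannot be undone by later additions. This calls for a Fodor-type pressing-down argument on a club $C \subseteq \omega_1$ where the supports $F_\alpha, S_\alpha$ and the normalized coefficients of the biorthogonal system have stabilized, combined with the fact that each normalized type appears cofinally in the enumeration by construction. The delicate interplay of WLD (countable supports), CH (counting normalized types), and $\Delta$-system combinatorics (reducing to countable data) is precisely the technical heart of the proof and is what produces the required contradiction.
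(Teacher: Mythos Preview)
Your proposal has a genuine gap at the ``killing'' step, and the reflection argument does not close it. You propose at stage $\alpha$ to choose $y_\alpha$ so that its $R^*_\alpha$-projection ``differs from every admissible value'', thereby preventing any biorthogonal system of type $T_\alpha$. But a biorthogonal system of type $T_\alpha$ need not use $y_\alpha$ at all: its vectors $v_\beta$ have supports $F_\beta$ in the $y$-generators, and after the $\Delta$-system refinement the wings $F_\beta\setminus F^*$ form an uncountable pairwise disjoint family of finite sets, so any fixed ordinal $\alpha$ lies in at most one wing. Choosing a single $y_\alpha$ (or even cofinally many of them) cannot by itself rule out an uncountable system whose wings simply avoid those $\alpha$'s. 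Moreover, the claim that ``the $R^*_\alpha$-part of the would-be wing vectors \dots\ is entirely prescribed by the type'' is not correct: the wing $w_\beta=\sum_{\gamma\in F_\beta\setminus F^*}c^\beta_\gamma y_\gamma$ depends on the (not-yet-constructed and $\beta$-dependent) vectors $y_\gamma$, so its $R^*$-projection is not determined by the countable data you have packaged into the type. Consequently there is no well-defined countable set of ``admissible values'' to avoid, and no mechanism by which your Fodor-style reflection can force a contradiction at some stage $\gamma$.

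The paper's argument is organised quite differently and supplies exactly the missing idea. One does not enumerate types of biorthogonal systems; one uses CH to enumerate $X^*$ itself as $(\p_\alpha)_{\alpha<\omega_1}$ (this uses $|X^*|=\mathfrak{c}$, which follows from the WLD hypothesis). The generators are taken in the explicit form $\tilde{e}_\alpha=e_\alpha+\sum_k(\lambda_\alpha)^k e_{\sigma_\alpha(k)}$ for an enumeration $\sigma_\alpha$ of $[0,\alpha)$, and the single real parameter $\lambda_\alpha$ is chosen by transfinite induction so that a \emph{positive} nondegeneracy condition holds: for every finite set of linearly independent functionals $\p_{\alpha_1},\dots,\p_{\alpha_N}$ with indices and supports below $\beta_1<\dots<\beta_N$, the matrix $(\langle\p_{\alpha_i},\tilde{e}_{\beta_j}\rangle)_{i,j}$ is nonsingular. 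This is feasible because, with $\lambda_{\beta_N}$ the only free variable, each such determinant is a nontrivial analytic function of $\lambda_{\beta_N}$, hence has only countably many zeros, and there are only countably many such constraints at each stage. Given a putative uncountable biorthogonal system in $Y=\Span\{\tilde{e}_\alpha\}$, a $\Delta$-system argument produces finitely many functionals $g_1,\dots,g_{n^2}$ and a nonzero vector $u\in Y$ in their common kernel which is a combination of at most $n^2$ generators $\tilde{e}_{\beta_i}$ with all $\beta_i$ above the indices and supports of the $g_j$'s; the determinant condition then forces $u=0$, a contradiction. The crucial point your sketch lacks is this reduction of the uncountable obstruction to a single finite linear system with controllably nonzero determinant.
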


The proof of Theorem B will be given in Section \ref{Sec: Th B}, where we will also discuss why the result offers an alternative approach to Theorem A in the case of WLD Banach spaces with density equal to $\omega_1$. We borrowed the very idea of the proof from \cite[Theorem 5.1]{HKR}, although the idea is then implemented in a different, in a sense dual, way and some technicalities present there will not be needed in the present argument.\smallskip

Of course, Theorem B ought to be compared to Kunen's celebrated construction, still under CH, of a non-separable Banach space that contains no uncountable biorthogonal system (\cite{Kunen note1, Kunen note2}, see \cite[\S 7]{Negr84}). On the one hand, the existence of a non-separable Banach space without uncountable biorthogonal systems is a much stronger result than the mere existence of some dense subspace without such systems; on the other one, our result applies to a rather large class of well-behaved Banach spaces.

In particular, the most striking and perhaps unexpected consequence of the above theorem is the existence of a dense subspace of the non-separable Hilbert space $\ell_2(\omega_1)$ that contains no uncountable biorthogonal system; this also offers a substantial strengthening of Gudder's result \cite{Gudder} concerning the existence of non-separable inner product spaces with no uncountable orthonormal system (see Section \ref{Sec: IPS} fore more details).\smallskip

When comparing Theorem A and Theorem B, the former admitting a stronger version in the case ${\rm dens}\,X\leq\mathfrak{c}$ and the latter only stated for such a case, it is natural to wonder if Theorem B can be extended to larger Banach spaces. In Section \ref{Sec: large}, we shall discuss the situation for WLD Banach spaces with density character at least $\mathfrak{c}^+$ and we shall show that the above results cannot be improved. Among others, we prove the following theorem.
\begin{theoremC} Let $X$ be a WLD Banach space with ${\rm dens}\,X=\mathfrak{c}^+$. Then, every dense subspace of $X$ contains a biorthogonal system of cardinality $\mathfrak{c}^+$.
\end{theoremC}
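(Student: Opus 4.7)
The plan is to combine the M-basis characterization of WLD spaces with a $\Delta$-system argument at cardinality $\mathfrak{c}^+$. By the Valdivia--Argyros--Mercourakis description, the WLD space $X$ admits a bounded M-basis $\{e_\alpha;e^*_\alpha\}_{\alpha<\mathfrak{c}^+}$---which we normalize so that $\|e_\alpha\|=1$ and $\sup_\alpha\|e^*_\alpha\|\leq C<\infty$---with the crucial extra property that for each $x\in X$ the support $\mathrm{supp}(x):=\{\alpha<\mathfrak{c}^+: e^*_\alpha(x)\neq 0\}$ is countable. Since $Y$ is dense in $X$, the restriction map identifies $X^*$ with $Y^*$, so every $e^*_\alpha$ is automatically available as a functional on $Y$.

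Next, for each $\alpha<\mathfrak{c}^+$ I would pick $y_\alpha\in Y$ with $\|y_\alpha-e_\alpha\|<(2C)^{-1}$. Then $|e^*_\alpha(y_\alpha)-1|\leq\|e^*_\alpha\|\cdot\|y_\alpha-e_\alpha\|\leq 1/2$, so $|e^*_\alpha(y_\alpha)|\geq 1/2$ and in particular $\alpha\in S_\alpha:=\mathrm{supp}(y_\alpha)$; the set $S_\alpha$ is countable by the WLD hypothesis. Since $\mathfrak{c}^+$ is a regular uncountable cardinal and $\mu^{\aleph_0}\leq\mathfrak{c}^{\aleph_0}=\mathfrak{c}<\mathfrak{c}^+$ for every $\mu<\mathfrak{c}^+$, the $\Delta$-system lemma applies to $\{S_\alpha\}_{\alpha<\mathfrak{c}^+}$: there are a countable root $R\subseteq\mathfrak{c}^+$ and a set $B\subseteq\mathfrak{c}^+$ with $|B|=\mathfrak{c}^+$ such that $S_\alpha\cap S_\beta=R$ for all distinct $\alpha,\beta\in B$. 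Removing the at most countably many $\alpha\in B$ that happen to lie in $R$, we may also assume $\alpha\notin R$ for every $\alpha\in B$; then $\alpha\in S_\alpha\setminus R$ and, by the pairwise disjointness of the tails $\{S_\alpha\setminus R:\alpha\in B\}$, $\alpha\notin S_\beta$ for every $\beta\in B\setminus\{\alpha\}$.

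Finally, for $\alpha\in B$ set $y^*_\alpha:=e^*_\alpha/e^*_\alpha(y_\alpha)\in X^*=Y^*$. For distinct $\alpha,\beta\in B$, the fact $\alpha\notin S_\beta$ gives $e^*_\alpha(y_\beta)=0$, hence $y^*_\alpha(y_\beta)=0$, while $y^*_\alpha(y_\alpha)=1$ by construction. Thus $\{y_\alpha;y^*_\alpha\}_{\alpha\in B}$ is a biorthogonal system in $Y$ of cardinality $\mathfrak{c}^+$, and it is moreover bounded since $\|y_\alpha\|\cdot\|y^*_\alpha\|\leq (1+(2C)^{-1})\cdot 2C=2C+1$. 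The main obstacle is isolating each approximant $y_\alpha$ from all the others by a single coordinate functional of the fixed M-basis; this is exactly what the $\Delta$-system lemma accomplishes, its cardinal-arithmetic hypothesis being satisfied for free in ZFC via $\mathfrak{c}^{\aleph_0}=\mathfrak{c}$, while the countability of each $S_\alpha$ is supplied by the WLD assumption.
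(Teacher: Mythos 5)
Your proof is correct, but it follows a genuinely different route from the paper's. The paper obtains Theorem C as a special case of Corollary \ref{Cor: large biorth}: by Theorem \ref{Th: large disjoint support}, every maximal disjointly supported family of unit vectors in a subspace $Z$ with $|Z|\geq{\rm dens}\,X=\Gamma$ has cardinality $\Gamma$; this is proved by transfinite induction, using the counting estimate $|c_0(S)|\leq|S|^\omega<\Gamma\leq|Z|$ to show that the coordinate-restriction operator $Z\to c_0(S)$ has non-trivial kernel at each stage, and then a single coordinate functional per vector converts the disjointly supported family into a biorthogonal system. You instead perturb the M-basis into the dense subspace and apply the generalised $\Delta$-system lemma to the countable supports of the approximants $y_\alpha$; this is essentially the alternative the authors flag (but do not carry out) in the remark after Corollary \ref{Cor: large biorth}---the scheme of Lemma \ref{Lemma: Biort System in span of M-basis} with \cite[Lemma III.6.15]{kunen} replacing the countable $\Delta$-system lemma. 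The trade-offs: your route needs the cardinal to be regular and to satisfy $\mu^{\aleph_0}<\Gamma$ for all $\mu<\Gamma$ (both true for $\mathfrak{c}^+$, as you verify), whereas the paper's induction works for every, possibly singular, $\Gamma$ with $\omega_1\Lt\Gamma$; on the other hand, your construction directly yields a \emph{bounded} biorthogonal system, while the paper's disjointly supported family is stronger in a different direction, feeding the later corollaries on Auerbach and orthonormal systems, which your $\Delta$-system with a possibly non-empty root cannot. Two cosmetic points: you do not actually need a bounded M-basis (a non-trivial theorem of Plichko, not part of the standard WLD characterisation)---just choose $y_\alpha$ with $\|y_\alpha-e_\alpha\|<\left(2\|e^*_\alpha\|\right)^{-1}$ separately for each $\alpha$; and the countability of ${\rm supp}(y_\alpha)$ holds for \emph{any} M-basis in \emph{any} Banach space (approximate $y_\alpha$ by finitely supported vectors), whereas the WLD hypothesis concerns countable supports of \emph{functionals} and enters only through the existence of the M-basis. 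Finally, since $\alpha\in S_\alpha$ and each $S_\alpha$ is countable, no single set can occur as $S_\alpha$ for uncountably many indices, so passing to $\mathfrak{c}^+$ distinct sets before invoking the $\Delta$-system lemma is legitimate, and your observation that $\alpha\in S_\beta$ would force $\alpha\in S_\alpha\cap S_\beta=R$ correctly closes the argument.
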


In conclusion to this section, let us mention that we do not know if some version of Theorem B can be valid without extra set-theoretical assumptions; in particular, we do not know if there exists, in ZFC, a non-separable normed space without uncountable biorthogonal systems.  This seems to be closely related to the problem of understanding how much of the arguments in \cite{Todorcevic} can be performed in absence of completeness (see \cite{HJ}). Moreover, we do not know what happens when trying to extend Theorem A beyond the WLD case. For example, we don't know if Theorem A is true for $\ell_\infty$.

\section{Preliminaries}
Our notation concerning Banach space theory is standard, for example, as in \cite{ak} or \cite{FHHMZ}. The unique difference, though a cardinal one, is that by \emph{subspace} of a normed (or Banach) space we understand a linear subspace, not necessarily a closed one. When closedness is assumed, it will be stressed explicitly.

It will be convenient to adopt von Neumann's definition of ordinal numbers and to regard cardinal numbers as initial ordinal numbers. In particular, we write $\omega$ for $\aleph_0$, $\omega_1$ for $\aleph_1$, \emph{etc}., as we often view cardinal numbers as well-ordered sets; we denote by $\mathfrak{c}$ the cardinality of continuum. For a cardinal number $\kappa$, we write $\kappa^+$ for the smallest cardinal number that is strictly greater than $\kappa$. Moreover, for cardinal numbers $\kappa$ and $\lambda$, the notation $\kappa^\lambda$ will always refer to cardinal exponentiation. The cardinality of a set $A$ will be denoted $|A|$. Occasionally, we also write $\N=\{1,2,\dots\}$ and $\N_0=\N\cup\{0\}$. We refer to \cite{Jech} or \cite{kunen} for more on set-theoretical background.

\subsection{WLD Banach spaces}\label{Sec: WLD}
Most our techniques in this note will depend on the possibility to introduce suitable systems of coordinates in some classes of Banach spaces; we shall remind here the notions relevant to our paper.

Given a normed space $X$, a system $\{x_\gamma;x_\gamma^*\} _{\gamma\in\Gamma}\subseteq X\times X^*$ is a \emph{biorthogonal system} for $X$ if $\langle x_\alpha^*, x_\beta\rangle =\delta_{\alpha,\beta}$, whenever $\alpha,\beta\in\Gamma$. A biorthogonal system  $\{x_\gamma;x_\gamma^*\}_{\gamma\in\Gamma}$ is a \emph{Markushevich basis} (\emph{M-basis}, for short) if
$$\overline{\rm span}\{x_\gamma\}_{\gamma\in\Gamma}=X\qquad\text{and}\qquad \overline{\rm span}^{w^*}\{x_\gamma^*\}_{\gamma\in\Gamma}=X^*.$$

In the context of separable normed spaces, it is a classical Markushevich's result \cite{Markushevich} that every separable normed space admits an M-basis. Moreover, several classes of (mainly, non-separable) Banach spaces can be characterised by the existence of M-bases with certain additional properties, \emph{cf.} \cite[Chapter 6]{HMVZ}; in particular, WLD Banach spaces admit an handy such characterisation, that we shall use as equivalent way to introduce them.\smallskip

Given an M-basis $\{x_\gamma;x_\gamma^*\}_{\gamma \in\Gamma}$ for a Banach space $X$, the \emph{support} of a functional $x^*\in X^*$ is the set
$${\rm supp}\, x^*:=\{\gamma\in\Gamma\colon \langle x^*, x_\gamma\rangle \neq0\}.$$
A functional $x^*\in X^*$ is said to be \emph{countably supported} by $\{x_\gamma;x_\gamma^*\}_{\gamma \in\Gamma}$, or $\{x_\gamma;x_\gamma^*\}_{\gamma \in\Gamma}$ \emph{countably supports} $x^*$, when its support is a countable subset of $\Gamma$. A Banach space $X$ is \emph{weakly Lindel\"of determined} (\emph{WLD}, for short) if it admits an M-basis $\{x_\gamma;x_\gamma^*\}_{\gamma\in\Gamma}$ that countably supports $X^*$, \emph{i.e.}, every $x^*\in X^*$ is countably supported by $\{x_\gamma; x_\gamma^*\}_{\gamma\in\Gamma}$.

Most our arguments will only require the existence of an M-basis that countably supports the dual space; in one instance, however, we shall need the result that every WLD Banach space admits a \emph{projectional resolution of the identity} \cite{Valdivia}, see, \emph{e.g.}, \cite[p. 180]{HMVZ}.\smallskip

The class of WLD Banach spaces has been widely investigated in the literature and many its properties have been detected due to the efforts of several mathematicians; let us refer to \cite[\S VI.7]{DGZ}, \cite[\S 14.5]{FHHMZ}, \cite[\S 3.4, \S 5.4]{HMVZ} \cite[\S 19.8]{KKLP}, \cite{Kalenda survey}, \cite{Ziz03} and the references therein for more details.

\subsection{Orthonormal systems in inner product spaces}\label{Sec: IPS}
In this part, we shall collect some known results on orthonormal systems in inner product spaces, that we shall compare to our results.

It is easy to see that, for an orthonormal system $\{e_\gamma\}_{\gamma\in\Gamma}$ in an inner product space $H$, the following conditions are equivalent:
\begin{romanenumerate}
\item $\{e_\gamma\}_{\gamma\in\Gamma}$ is complete, \emph{i.e.}, $\overline{\rm span}\{e_\gamma\} _{\gamma\in\Gamma}=H$;
\item $\{e_\gamma\}_{\gamma\in\Gamma}$ is a basis for $H$;
\item Parseval's equality $\|x\|^2=\sum_{\gamma\in\Gamma} |\langle e_\gamma,x\rangle|^2$ holds true for every $x\in H$.
\end{romanenumerate}
Moreover, these conditions obviously imply that $\{e_\gamma\}_{\gamma\in\Gamma}$ is maximal, the converse being in general false in absence of completeness. As a simple example, let $\{e_j\}_{j=1}^\infty$ be a complete orthonormal system in a Hilbert space $H$, let
$$V:={\rm span}\left\{\sum_{j=1}^\infty 2^{-j}e_j, e_2,e_3,\dots\right\}$$
and observe that the maximal orthonormal system $\{e_j\}_{j=2}^\infty$ is not complete in $V$. More generally, every non complete inner product space contains a maximal orthonormal system that is not complete, \cite{Gudder2}.\smallskip

Moreover, it is a standard fact that every separable inner product space admits a complete orthonormal system, due to the Gram-Schmidt algorithm. This is known to be false when the separability assumption is dropped. More precisely, it was discovered by several authors independently, see, \emph{e.g.}, \cite{Farah, Fuchino, Shelah}, that for every uncountable cardinality $\lambda$ there exists an inner product space $H$ with ${\rm dens}\,H=\lambda$ that admits no complete orthonormal system.

An earlier result in a similar direction was proved by Gudder \cite{Gudder}, who constructed a non-separable inner product space that contains no uncountable orthonormal system (see also \cite[Problem 54]{Halmos}). In order to state a more general result, let us recall the elementary fact that any two maximal orthonormal sets in an inner product space $H$ must have the same cardinality, denoted ${\rm dim}\,H$. Then, given cardinal numbers $\kappa$ and $\lambda$ with $\kappa\leq\lambda$, there exists an inner product space $H$ with ${\rm dim}\,H=\kappa$ and ${\rm dens}\,H=\lambda$ if and only if $\lambda\leq\kappa^\omega$, \cite{BCW}; let us also refer to \cite{Farah, Fuchino} for a discussion of the result.

\subsection{Perturbation of M-bases} In this part, we shall show that in the class of separable Banach spaces Problem \ref{Problem} has a positive answer indeed. As we already mentioned, we consider this a folklore result, whose proof is essentially the same as in the Small Perturbation principle, more precisely its extension to M-bases (see, \emph{e.g.}, \cite[p. 46]{LiTzaI}). For the sake of completeness, we shall give the proof.

\begin{theorem}\label{Th: densely for separable} Let $X$ be a separable Banach space and let $\{e_j;e^*_j\}_{j=1}^\infty$ be an M-basis for $X$. Then every dense subspace of $X$ contains a further dense subspace that is isomorphic to ${\rm span}\{e_j\} _{j=1}^\infty$.

In particular, every two dense subspaces of $X$ are densely isomorphic.
\end{theorem}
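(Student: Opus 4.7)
The approach is the classical small perturbation argument adapted to M-bases. Let $Y$ be a dense subspace of $X$. Since each functional $e_j^*\in X^*$ is bounded, I can choose, for every $j\in\N$, a vector $f_j\in Y$ such that
\[
\|f_j-e_j\|<\e_j, \qquad\text{where }\;\sum_{j=1}^\infty \|e_j^*\|\,\e_j<1.
\]
This is possible by density of $Y$ in $X$ and by taking the $\e_j$ sufficiently small (for instance $\e_j=2^{-j}(1+\|e_j^*\|)^{-1}$).

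Next, I would define the operator $S\colon X\to X$ by
\[
Sx:=x+\sum_{j=1}^\infty \langle e_j^*,x\rangle\,(f_j-e_j),\qquad x\in X.
\]
The series converges absolutely in $X$ because
\[
\sum_{j=1}^\infty |\langle e_j^*,x\rangle|\,\|f_j-e_j\|\leq \|x\|\sum_{j=1}^\infty \|e_j^*\|\,\e_j,
\]
so $S$ is bounded with $\|S-I\|<1$. Consequently $S$ is a linear automorphism of $X$. A direct computation using $\langle e_k^*,e_j\rangle=\delta_{k,j}$ gives $Se_j=f_j$ for every $j$, hence $S({\rm span}\{e_j\}_{j=1}^\infty)={\rm span}\{f_j\}_{j=1}^\infty$.

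Since $S$ is an isomorphism of $X$ and $\overline{{\rm span}\{e_j\}}=X$ (the $\{e_j;e_j^*\}$ being an M-basis), the image ${\rm span}\{f_j\}_{j=1}^\infty$ is also dense in $X$; moreover it lies in $Y$ by construction, and it is linearly isomorphic to ${\rm span}\{e_j\}_{j=1}^\infty$ via the restriction of $S^{-1}$. This proves the first assertion. For the ``in particular'' statement, apply this to two dense subspaces $Y$ and $Z$: each one contains a dense subspace isomorphic to ${\rm span}\{e_j\}_{j=1}^\infty$, so $Y$ and $Z$ are densely isomorphic.

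No genuine obstacle arises, the whole argument being a routine modification of the small perturbation principle; the only point to be careful about is keeping track of $\|e_j^*\|$ when choosing the $\e_j$, so that the Neumann-series argument applies and $S$ turns out to be an isomorphism of the whole space $X$ (not merely of ${\rm span}\{e_j\}$).
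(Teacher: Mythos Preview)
Your proof is correct and follows essentially the same small perturbation argument as the paper: choose $f_j\in Y$ close enough to $e_j$ that the linear map $e_j\mapsto f_j$ is a small perturbation of the identity, hence an isomorphism, and conclude that ${\rm span}\{f_j\}$ is dense. The only minor difference is that you extend the perturbation to an automorphism $S$ of the whole space $X$ (via the Neumann series) and read off density from that, whereas the paper works directly on ${\rm span}\{e_j\}$ and checks density by a one-line Riesz-lemma argument; both are equally routine.
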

\begin{proof} Let $Y$ be any dense subspace $X$ and select vectors $y_j\in Y$ such that $\|y_j-e_j\|\cdot\|e^*_j\|<2^{-(j+1)}$, for each $j\in\N$. Let us now consider the linear operator $T\colon {\rm span}(e_j) _{j=1}^\infty\to{\rm span}(y_j)_{j=1}^\infty$ defined by the rule $Te_j=y_j$ ($j\in\N$). For each $x\in{\rm span}(e_j) _{j=1}^\infty$, we then have (noting that the first series is indeed a finite sum)
$$\|x-Tx\|=\left\|\sum_{j=1}^\infty \langle e^*_j,x\rangle (e_j-y_j) \right\|\leq \sum_{j=1}^\infty\|e^*_j\|\|e_j-y_j\|\cdot\|x\| \leq \sum_{j=1}^\infty2^{-(j+1)}\|x\|=\frac{1}{2}\|x\|.$$
Therefore, $\frac{1}{2}\|x\|\leq\|Tx\|\leq\frac{3}{2}\|x\|$, whence $T$ is an isomorphism between ${\rm span}(e_j) _{j=1}^\infty$ and $Y_0:= {\rm span}(y_j)_{j=1}^\infty\subseteq Y$.

Finally, we check that $Y_0$ is a dense subspace of $X$. If this were not true, by Riesz' lemma we would find a unit vector $x\in S_X$ with ${\rm dist}(x,Y_0)>1/2$; by approximation, we could also assume $x\in {\rm span}(e_j) _{j=1}^\infty$. But then, $\|x-Tx\|\leq1/2$ and $Tx\in Y_0$ would yield a contradiction.
\end{proof}

\begin{remark} It is worth observing that the canonical dense subspace that we built in Theorem \ref{Th: densely for separable} is moreover unique up to isomorphisms. Indeed, Sophie Grivaux \cite[\S 2]{Grivaux} proved that any two countably dimensional, dense subspaces of a Banach space are isomorphic. We are grateful to Gilles Godefroy for pointing out to us this result.
\end{remark}

As an immediate consequence of the theorem and the considerations before the statement of Problem \ref{Problem}, we obtain the following corollary.
\begin{corollary} Two separable normed spaces are densely isomorphic if and only if their completions are isomorphic.
\end{corollary}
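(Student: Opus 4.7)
The plan is to prove the two implications separately. For the forward direction, suppose $X$ and $Y$ are densely isomorphic normed spaces, with dense subspaces $X_0 \subseteq X$ and $Y_0 \subseteq Y$ and a linear isomorphism $T \colon X_0 \to Y_0$. As was already observed in the excerpt (just after the definition of densely isomorphic), $T$ being bi-Lipschitz, it extends to a bi-Lipschitz linear map between the completions of $X_0$ and $Y_0$; but these completions coincide with the completions of $X$ and $Y$, respectively, since $X_0$ is dense in $X$ (hence in $\widetilde{X}$) and similarly for $Y_0$. Hence $\widetilde{X}$ and $\widetilde{Y}$ are isomorphic. No separability is needed for this direction.

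For the converse, assume that $X$ and $Y$ are separable and fix an isomorphism $U \colon \widetilde{X} \to \widetilde{Y}$. Since $X$ is dense in $\widetilde{X}$, the Banach space $\widetilde{X}$ is separable and thus, by Markushevich's classical theorem, admits an M-basis $\{e_j; e_j^*\}_{j=1}^\infty$. The idea is then to view both $X$ and $U^{-1}(Y)$ as dense subspaces of the same separable Banach space $\widetilde{X}$, and use Theorem \ref{Th: densely for separable} as a bridge.

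More precisely, $X$ is a dense subspace of $\widetilde{X}$, so by Theorem \ref{Th: densely for separable} it contains a dense subspace $X_0$ isomorphic to $E := \mathrm{span}\{e_j\}_{j=1}^\infty$. On the other hand, $Y$ is dense in $\widetilde{Y}$, so $U^{-1}(Y)$ is a dense subspace of $\widetilde{X}$; applying Theorem \ref{Th: densely for separable} again, there is a dense subspace $Y_0' \subseteq U^{-1}(Y)$ isomorphic to $E$. Setting $Y_0 := U(Y_0')$, we obtain a dense subspace of $Y$ isomorphic (via $U$, then the isomorphism to $E$) to $E$, and hence isomorphic to $X_0$. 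Thus $X_0 \subseteq X$ and $Y_0 \subseteq Y$ witness that $X$ and $Y$ are densely isomorphic.

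The argument is essentially a bookkeeping exercise once Theorem \ref{Th: densely for separable} is available; there is no real obstacle. The only point worth care is to make sure the use of separability is properly invoked (via the existence of an M-basis in $\widetilde{X}$), and to keep track of the fact that one applies Theorem \ref{Th: densely for separable} to two different dense subspaces of the \emph{same} separable Banach space $\widetilde{X}$, which is what forces the resulting canonical dense subspaces of $X$ and $Y$ to be isomorphic to the common model $E$.
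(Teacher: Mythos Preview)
Your proof is correct and follows essentially the same approach as the paper: the forward direction is the general observation recorded just before Problem~\ref{Problem}, and the backward direction is exactly the ``In particular'' clause of Theorem~\ref{Th: densely for separable}, applied after transporting $Y$ into $\widetilde{X}$ via the isomorphism $U$. The paper simply states that the corollary is an immediate consequence of these two ingredients, whereas you spell out the bookkeeping explicitly.
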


It is well known that the canonical basis of the space $\ell_1(\Gamma)$ is stable under more drastic perturbations than the ones permitted in the Small Perturbation lemma. This fact allows us to obtain a positive answer to the problem also for the spaces $\ell_1(\Gamma)$.
\begin{theorem}\label{Th: densely for ell1} Let $(e_\gamma)_{\gamma\in\Gamma}$ be the canonical basis of $\ell_1(\Gamma)$. Then, every dense subspace of $\ell_1(\Gamma)$ contains a dense subspace isomorphic to ${\rm span}\{e_\gamma\}_{\gamma\in\Gamma}$.

In particular, any two dense subspaces of $\ell_1(\Gamma)$ are densely isomorphic.
\end{theorem}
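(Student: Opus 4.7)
\smallskip

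The plan is to mimic the proof of Theorem~\ref{Th: densely for separable}, exploiting the fact that in $\ell_1(\Gamma)$ the norm satisfies the exact identity $\|\sum_\gamma a_\gamma e_\gamma\|_1 = \sum_\gamma |a_\gamma|$. This allows the perturbation to be controlled coordinate-wise without any summability condition on the errors, so we may use a uniform $\varepsilon$ rather than the rapidly decaying $2^{-(j+1)}$ of the separable case.

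Fix a dense subspace $Y$ of $\ell_1(\Gamma)$ and a constant $\varepsilon\in(0,1)$. First, for each $\gamma\in\Gamma$ select a vector $y_\gamma\in Y$ with $\|y_\gamma-e_\gamma\|_1<\varepsilon$; this is possible by density of $Y$. Next, define a linear map $T\colon{\rm span}\{e_\gamma\}_{\gamma\in\Gamma}\to{\rm span}\{y_\gamma\}_{\gamma\in\Gamma}$ by $Te_\gamma=y_\gamma$. For any finitely supported $x=\sum_\gamma a_\gamma e_\gamma$,
$$\|Tx-x\|_1=\Bigl\|\sum_\gamma a_\gamma(y_\gamma-e_\gamma)\Bigr\|_1\leq \sum_\gamma|a_\gamma|\,\|y_\gamma-e_\gamma\|_1\leq \varepsilon\sum_\gamma|a_\gamma|=\varepsilon\|x\|_1,$$
from which $(1-\varepsilon)\|x\|_1\leq\|Tx\|_1\leq(1+\varepsilon)\|x\|_1$, so $T$ is a linear isomorphism onto $Y_0:={\rm span}\{y_\gamma\}_{\gamma\in\Gamma}\subseteq Y$.

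It remains to check that $Y_0$ is dense in $\ell_1(\Gamma)$. Assuming otherwise, Riesz' lemma provides a unit vector $x\in S_{\ell_1(\Gamma)}$ with ${\rm dist}(x,Y_0)>\varepsilon$; since ${\rm span}\{e_\gamma\}_{\gamma\in\Gamma}$ is itself dense in $\ell_1(\Gamma)$, we can approximate and assume $x\in{\rm span}\{e_\gamma\}_{\gamma\in\Gamma}$ (with norm still close to $1$, so that the inequality ${\rm dist}(x,Y_0)>\varepsilon\|x\|_1$ persists). But then $Tx\in Y_0$ and $\|x-Tx\|_1\leq\varepsilon\|x\|_1$, contradicting the choice of $x$. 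The final clause follows immediately: any two dense subspaces both contain a dense copy of the same space ${\rm span}\{e_\gamma\}_{\gamma\in\Gamma}$, hence are densely isomorphic.

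I do not expect a real obstacle here; the only point worth stressing is precisely the contrast with the separable case, namely that the isometric decomposition of the $\ell_1$-norm along the coordinates $(e_\gamma)_{\gamma\in\Gamma}$ is what permits a uniform perturbation bound and hence the treatment of the uncountable index set $\Gamma$ without any enumeration or summability argument.
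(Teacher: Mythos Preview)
Your proof is correct and follows essentially the same approach as the paper. The paper is slightly more terse, invoking the ``standard perturbation properties of the $\ell_1$ basis'' (with a reference to Jameson) to conclude directly that $(y_\gamma)_{\gamma\in\Gamma}$ is a Schauder basis equivalent to $(e_\gamma)_{\gamma\in\Gamma}$, which yields both the isomorphism and the density in one stroke; you instead unpack that perturbation estimate explicitly and then argue density via Riesz' lemma as in Theorem~\ref{Th: densely for separable}, but the underlying idea---that the exact $\ell_1$-decomposition of the norm permits a uniform $\varepsilon$-perturbation over an arbitrary index set---is identical.
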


\begin{proof} Fixed a dense subspace $Y$ of $\ell_1(\Gamma)$ and $\e\in(0,1)$, we may find vectors $(y_\gamma)_{\gamma\in\Gamma}\subseteq Y$ such that $\|e_\gamma - y_\gamma\|<\e$ ($\gamma\in\Gamma$). By the standard perturbation properties of the $\ell_1$ basis (see, \emph{e.g.}, \cite[p. 331]{Jameson}), we derive that $(y_\gamma)_{\gamma\in\Gamma}$ is a Schauder basis on $\ell_1(\Gamma)$, equivalent to $(e_\gamma)_{\gamma\in\Gamma}$. Hence, ${\rm span}(y_\gamma)_{\gamma\in\Gamma}$ is a dense subspace of $Y$, isomorphic to ${\rm span}(e_\gamma) _{\gamma\in\Gamma}$.
\end{proof}

\section{Non-separable WLD Banach spaces}\label{Sec: Th A}
In this section, we shall focus on dense subspaces of non-separable WLD Banach spaces and we shall show how to build two of them that are not densely isomorphic, thereby proving Theorem A. We shall first consider the case when the density character of the Banach space $X$ under consideration satisfies $\omega_1\leq {\rm dens}\,X\leq \mathfrak{c}$, in which case we prove a stronger result. We then derive the validity of the general result from this particular case.\smallskip

Let us start with two well-known lemmata that we shall exploit in the course of the argument. Their proofs are so simple that we include them here.

\begin{lemma}\label{lemma: conv WLD} Let $\{e_\alpha;e^*_\alpha\}_{\alpha\in\Gamma}$ be an M-basis for a non-separable WLD Banach space $X$. Then
$$0\in \overline{\rm conv}\{e_\alpha\}_{\alpha\in\Gamma}.$$
\end{lemma}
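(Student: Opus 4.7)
The plan is to argue by contradiction using the Hahn--Banach separation theorem together with the defining feature of WLD spaces.

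Suppose toward a contradiction that $0 \notin \overline{\mathrm{conv}}\{e_\alpha\}_{\alpha\in\Gamma}$. Since this set is closed and convex, a standard geometric Hahn--Banach argument produces a functional $x^* \in X^*$ and a constant $c > 0$ such that $\langle x^*, e_\alpha\rangle \geq c$ for every $\alpha \in \Gamma$. In particular, $\mathrm{supp}\, x^* = \Gamma$.

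Now I would invoke the WLD assumption: since the M-basis $\{e_\alpha; e_\alpha^*\}_{\alpha\in\Gamma}$ countably supports $X^*$, the set $\mathrm{supp}\, x^*$ must be countable. On the other hand, because $X$ is non-separable and $\overline{\mathrm{span}}\{e_\alpha\}_{\alpha\in\Gamma} = X$, the index set $\Gamma$ is necessarily uncountable. These two facts are incompatible, which yields the desired contradiction.

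There is essentially no obstacle in this argument; the whole content consists in translating ``non-separable plus countably supported M-basis'' into a quantitative statement about $\mathrm{supp}\, x^*$, after which separation takes care of the rest. If anything, the only minor point worth making explicit is that $|\Gamma|$ is uncountable, which follows immediately from the density of $\mathrm{span}\{e_\alpha\}_{\alpha\in\Gamma}$ in the non-separable space $X$.
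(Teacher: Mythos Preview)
Your argument is correct and essentially identical to the paper's: both assume by contradiction that $0$ is not in the closed convex hull, separate with Hahn--Banach to obtain a functional bounded below by a positive constant on all $e_\alpha$, and then observe that this forces the support of the functional to be all of the uncountable set $\Gamma$, contradicting the countable-support property of M-bases in WLD spaces. The only cosmetic difference is that you spell out why $\Gamma$ is uncountable, which the paper leaves implicit.
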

\begin{proof} If not, the Hahn-Banach theorem would yield us the existence of a functional $\p\in X^*$ and $\e>0$ such that $\langle\p,x\rangle\geq \e$, for each $x\in \overline{\rm conv}\{e_\alpha\}_{\alpha\in\Gamma}$. In particular, $\langle\p,e_\alpha\rangle\geq\e$, for each $\alpha\in\Gamma$, whence ${\rm supp}\,\p=\Gamma$ is uncountable, a contradiction.
\end{proof}

The next lemma, due to Victor Klee \cite{Klee} (see also \cite[Proposition 2.1.6]{GurariyLusky}, or \cite[p.~113]{Milman}), is at the origin of the theory of overcomplete sequences. We need one piece of notation prior to its statement (which we only formulate in the generality needed here).

Given a normalised M-basis $\{e_j;e^*_j\}_{j=0}^\infty$ for a Banach space $X$ and $q\in(0,1)$, we set
$$g_q:=\sum_{j=0}^\infty q^j e_j.$$
\begin{lemma}[\cite{Klee}]\label{lemma: overcomplete} For every infinite set $J\subseteq(0,1/2)$, one has
$$\overline{\rm span}\{g_q\}_{q\in J}=X.$$
\end{lemma}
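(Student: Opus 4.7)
The plan is to use the characterising property of M-bases: since $\{e_j;e^*_j\}_{j=0}^\infty$ has dense linear span in $X$, a functional $x^*\in X^*$ is zero precisely when $\langle x^*,e_j\rangle=0$ for every $j\in\N_0$. Hence by the Hahn-Banach theorem, to prove $\overline{\rm span}\{g_q\}_{q\in J}=X$ it suffices to show that every $x^*\in X^*$ satisfying $\langle x^*,g_q\rangle=0$ for all $q\in J$ already vanishes on each $e_j$.

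First, I would check that the vectors $g_q$ are well defined: since the M-basis is normalised and $|q|<1/2$, one has $\sum_{j=0}^\infty \|q^je_j\|\leq \sum_{j=0}^\infty q^j<\infty$, so the series converges absolutely in $X$. Then, given $x^*\in X^*$ as above, set $a_j:=\langle x^*,e_j\rangle$ and observe that $|a_j|\leq \|x^*\|$ for every $j\in\N_0$. By continuity of $x^*$, we have
$$\langle x^*,g_q\rangle=\sum_{j=0}^\infty a_j q^j=0\qquad (q\in J).$$
Since $(a_j)$ is a bounded sequence, the power series $f(z):=\sum_{j=0}^\infty a_j z^j$ has radius of convergence at least $1$; in particular, $f$ is holomorphic on the open unit disk in $\C$.

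The crucial step is now an identity-theorem argument. The set $J$ is an infinite subset of the compact interval $[0,1/2]$, which is contained in the open unit disk. Hence $J$ has an accumulation point $q_0\in[0,1/2]$, and $f$ vanishes on the set $J$ which clusters at an interior point of its domain of holomorphy. By the identity theorem for holomorphic functions it follows that $f\equiv 0$ on the open unit disk, so $a_j=0$ for every $j\in\N_0$. This gives $\langle x^*,e_j\rangle=0$ for every $j$, whence $x^*=0$, as required.

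I do not anticipate any serious obstacle: once one recognises that the evaluation $\langle x^*,g_q\rangle$ produces the values of an analytic function on an infinite subset of its disk of convergence, the conclusion is forced by the identity theorem. The only point requiring mild care is the initial justification that $g_q$ is well defined and that $x^*$ commutes with the defining series, which follows from the normalisation $\|e_j\|=1$ and the restriction $q<1/2$ (in fact the hypothesis $J\subseteq (0,1)$ would suffice for the argument, but the stated range gives uniform absolute convergence and is all that will be needed elsewhere).
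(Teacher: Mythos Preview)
Your proof is correct and follows essentially the same route as the paper: annihilate the span by a functional, read off a power series that vanishes on the infinite set $J\subseteq[0,1/2]$, and invoke the identity theorem to force all coefficients $\langle x^*,e_j\rangle$ to be zero. The only cosmetic differences are that you phrase the argument via Hahn--Banach rather than by contradiction, and work with a holomorphic function on the unit disk rather than a real-analytic function on $(-1,1)$.
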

\begin{proof} If not, we would find a non-zero functional $\p\in X^*$ such that $\langle\p,g_q\rangle=0$, for each $q\in J$, \emph{i.e.},
$$\sum_ {j=0}^\infty\langle\p,e_j\rangle\cdot q^j=0 \qquad (q\in J).$$
Therefore, the function
$$q\mapsto \sum_ {j=0}^\infty\langle\p,e_j\rangle\cdot q^j$$
is an analytic function on $(-1,1)$ and it equals $0$ on infinitely many points in $[0,1/2]$ (namely, each point in $J$). According to the identity principle, it vanishes identically, whence $\langle\p,e_j\rangle=0$ for each $j\in\N_0$. Consequently, $\p=0$, a contradiction.
\end{proof}

We are now ready for the first main result of the section.
\begin{theorem}\label{th: WLD small} Let $X$ be a WLD Banach space with $\omega_1\leq {\rm dens}\,X\leq\mathfrak{c}$. Then there exist two dense subspaces $Y$ and $Z$ of $X$ such that no non-separable subspace of $Y$ is isomorphic to a subspace of $Z$ (and vice versa).

A fortiori, $Y$ and $Z$ are not densely isomorphic.
\end{theorem}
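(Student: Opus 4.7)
\emph{Plan of proof.} The strategy is to manufacture $Y$ and $Z$ by combining the M-basis decomposition with Klee's overcomplete construction (Lemma \ref{lemma: overcomplete}), so that the two dense subspaces carry combinatorially incompatible ``support structures''.

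First, I would fix a normalized M-basis $\{e_\alpha; e^*_\alpha\}_{\alpha \in \Gamma}$ that countably supports $X^*$, where $|\Gamma|={\rm dens}\,X\in[\omega_1,\mathfrak{c}]$. Because $|\Gamma|\leq\mathfrak{c}$, I would fix an injection $\alpha\mapsto q_\alpha\in(0,1/2)$ and a partition $\Gamma=\bigsqcup_{\eta\in I}A_\eta$ into countably infinite blocks with $|I|=|\Gamma|$, with a fixed enumeration $A_\eta=\{\alpha^\eta_n\}_{n=0}^\infty$ of each block.

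I would then set $Y:={\rm span}\{e_\alpha\}_{\alpha\in\Gamma}$, the canonical dense subspace in which every vector is finitely supported relative to the M-basis. To build $Z$, I would form the Klee element $g_\eta:=\sum_{n=0}^\infty q_\eta^n e_{\alpha^\eta_n}$ inside each block and define
\[
Z:={\rm span}\bigl(\{g_\eta:\eta\in I\}\cup\{e_{\alpha^\eta_n}:\eta\in I,\ n\geq1\}\bigr).
\]
Density of $Z$ in $X$ would follow from Lemma \ref{lemma: overcomplete} applied blockwise: each omitted basis vector $e_{\alpha^\eta_0}$ lies in $\overline{Z}$ since it can be recovered as $g_\eta$ minus the norm-convergent series $\sum_{n\geq1}q_\eta^n e_{\alpha^\eta_n}$, and the remaining $e_\alpha$'s are inside $Z$ by construction.

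The heart of the argument is to show that no non-separable $V\subseteq Y$ is linearly isomorphic to any subspace $W\subseteq Z$, and symmetrically. For the forward direction, a $\Delta$-system extraction on the finite M-basis supports of vectors in a non-separable $V\subseteq Y$ would produce an uncountable disjointly supported subfamily, and hence an uncountable biorthogonal system in $V$ whose dual functionals (inherited from $\{e^*_\alpha\}$) have \emph{finite} support in the M-basis. A hypothetical isomorphism $T\colon V\to W\subseteq Z$ would transport this system into $Z$; using the countably supporting M-basis afforded by the WLD hypothesis, I would argue that these uncountably many disjointly supported relations, when pulled back through the $g_\eta$-definitions, must concentrate in some single Klee block, and the identity-principle argument underlying Lemma \ref{lemma: overcomplete} would then force a contradiction. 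The reverse direction would use a dual invariant provided by Lemma \ref{lemma: conv WLD}: non-separable subspaces of $Z$ must ``see'' uncountably many of the $g_\eta$'s, each carrying an intrinsic analytic relation that the finitely-supported vectors of $Y$ cannot host.

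The main obstacle will be precisely this last step: turning the overcomplete-versus-finite-support dichotomy into a bona fide isomorphism invariant that survives an arbitrary bounded linear injection $T$, while using the countably supporting property provided by WLD to control how the images of the transported biorthogonal system interact with the block decomposition of $Z$.
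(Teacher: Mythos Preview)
Your construction has a fatal flaw at the outset: the subspaces $Y$ and $Z$ you define share a non-separable subspace. Indeed, you set $Y=\Span\{e_\alpha\}_{\alpha\in\Gamma}$ and include in $Z$ all the vectors $e_{\alpha^\eta_n}$ for $\eta\in I$ and $n\geq 1$. Hence $\Span\{e_{\alpha^\eta_n}:\eta\in I,\ n\geq 1\}\subseteq Y\cap Z$, and this common subspace has density $|\Gamma|\geq\omega_1$. So the very conclusion you are after---that no non-separable subspace of $Y$ is isomorphic to a subspace of $Z$---fails trivially, since such a subspace sits identically inside both.

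Even setting this aside, your proposed invariant is not a genuine isomorphism invariant. You produce in $V\subseteq Y$ an uncountable biorthogonal system whose functionals have finite support \emph{with respect to the M-basis $\{e_\alpha\}$}, and then transport the vectors via $T$ into $Z$. But the transported functionals are $(T^{-1})^*$-images, and there is no reason these should have any special support structure relative to the M-basis; the property ``having finitely supported dual functionals'' is coordinate-dependent and does not survive an arbitrary isomorphism. The vague appeal to ``concentration in a single Klee block'' and the identity principle does not bridge this gap.

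The paper's argument avoids both problems by identifying a clean, intrinsic invariant: \emph{admitting a countable separating family of functionals}. One subspace $Y$ (built from vectors $\tilde{e}_\alpha=e_\alpha+\sum_{j<\omega}(q_\alpha)^j e_j$) is shown, via a Vandermonde-determinant computation, to have the countable family $(e^*_j)_{j<\omega}$ separating its points. The other subspace $Z=\Span\{e_\alpha\}_{\alpha\in\Gamma}$ is shown, via a $\Delta$-system argument together with the WLD countable-support hypothesis, to have no non-separable subspace with a countable separating family. Since this property passes through isomorphisms, the conclusion follows immediately. Your plan lacks any analogue of this invariant.
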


\begin{proof} Let us denote by $\Gamma$ the cardinal number ${\rm dens}\,X$ and select a normalised M-basis $\{e_\alpha;e^*_\alpha\}_{\alpha<\Gamma}$ for $X$. We also pick an injective long sequence $(q_\alpha)_{\omega\leq \alpha<\Gamma}$ of scalars in $(0,1/2)$ with the property that every open subset of $(0,1/2)$ contains uncountably many scalars $q_\alpha$ (this is possible since the Euclidean topology of $(0,1/2)$ has a countable base).

Let us then consider the vectors
$$\tilde{e}_\alpha:=e_\alpha+\sum_{j=0}^\infty(q_\alpha)^j\cdot e_j \qquad (\omega\leq\alpha<\Gamma).$$
We claim that the subspace $Y:={\rm span}\{\tilde{e}_\alpha\}_{\omega \leq\alpha<\Gamma}$ is the first subspace we are seeking. To this aim, we first show that $Y$ is a dense subspace of $X$.

\begin{claim}\label{claim: span dense} $\overline{\rm span}\{\tilde{e}_\alpha\}_{\omega\leq\alpha<\Gamma}=X$.
\end{claim}

\begin{proof}[Proof of Claim \ref{claim: span dense}] We shall keep the notation from Lemma \ref{lemma: overcomplete} to denote by $g_q$ the vector
$$g_q:=\sum_{j=0}^\infty q^j e_j\qquad(q\in(0,1/2)).$$
In particular, for $\omega\leq\alpha<\Gamma$, we have
\begin{equation}\label{eq: split with geom}
    \tilde{e}_\alpha = g_{q_\alpha} + e_\alpha.
\end{equation}\smallskip

Let us now fix $q\in(0,1/2)$ and $\e>0$. By construction, there exists an uncountable subset $\Gamma_{q,\e}$ of $[\omega,\Gamma)$ such that $|q - q_\alpha|<\e$, whenever $\alpha\in \Gamma_{q,\e}$. For each such $\alpha$, we have
\begin{equation}\label{eq: geom close}
\begin{split}
\left\|g_q - g_{q_\alpha}\right\|=\left\|\sum_{j=0}^\infty \left(q^j - (q_\alpha)^j\right)\cdot e_j\right\|\leq \sum_{j=0}^\infty \left\vert q^j - (q_\alpha)^j\right\vert \\
=\left\vert \sum_{j=0}^\infty \left(q^j - (q_\alpha)^j\right)\right\vert
=\left\vert \frac{q_\alpha - q}{(1-q)(1-q_\alpha)}\right\vert \leq 4\e.
\end{split}
\end{equation}
Thus, every convex combination of the vectors $(g_{q_\alpha})_{\alpha\in \Gamma_{q,\e}}$ has distance at most $4\e$ from $g_q$.\smallskip

On the other hand, $\{e_\alpha\}_{\alpha\in\Gamma_{q,\e}}$ is clearly an M-basis for its closed linear span, a non-separable WLD Banach space. Consequently, Lemma \ref{lemma: conv WLD} yields the existence of a convex combination of the vectors $\{e_\alpha\}_{\alpha\in\Gamma_{q,\e}}$ with arbitrarily small norm: there are positive scalars $\lambda_1,\dots,\lambda_n$ with $\sum_{i=1}^n\lambda_i=1$ and indices $\alpha_1,\dots,\alpha_n\in \Gamma_{q,\e}$ such that
$$\left\Vert \sum_{i=1}^n\lambda_i e_{\alpha_i} \right\Vert <\e.$$

If we consider the corresponding convex combination of the $\tilde{e}_\alpha$'s, by (\ref{eq: split with geom}) and (\ref{eq: geom close}), we then obtain
$$\left\Vert \sum_{i=1}^n\lambda_i \tilde{e}_{\alpha_i} -g_q\right\Vert = \left\Vert \sum_{i=1}^n\lambda_i e_{\alpha_i} + \sum_{i=1}^n\lambda_i g_{q_{\alpha_i}} - \sum_{i=1}^n\lambda_i g_q\right\Vert$$
$$\leq \e + \sum_{i=1}^n \lambda_i \left\Vert g_{q_{\alpha_i}}-g_q \right\Vert\leq5\e.$$
As a consequence of this argument (and $\e>0$ being arbitrary), we conclude that
\begin{equation}\label{eq: geom in span}
    \{g_q\}_{q\in(0,1/2)} \subseteq \overline{\rm conv}\{\tilde{e}_\alpha\}_{\omega\leq\alpha<\Gamma}.
\end{equation}\smallskip

This and (\ref{eq: split with geom}) then immediately imply that $e_\alpha\in \overline{\rm span}\{\tilde{e}_\alpha\} _{\omega\leq\alpha<\Gamma}$, whenever $\omega\leq\alpha<\Gamma$. Finally, Lemma \ref{lemma: overcomplete} implies that
$$\overline{\rm span}\{g_q\}_{q\in(0,1/2)}= \overline{\rm span}\{e_\alpha\}_{\alpha<\omega},$$
which, in conjunction with (\ref{eq: geom in span}), assures us that $e_\alpha\in \overline{\rm span}\{\tilde{e}_\alpha\} _{\omega\leq\alpha<\Gamma}$ also when $\alpha<\omega$, thereby concluding the proof of the claim.
\end{proof}

Before we find the second dense subspace $Z$ of $X$, let us also note the following crucial property of $Y$.
\begin{fact}\label{fact: functionals separate} The functionals $(e^*_j)_{j=0}^\infty$ separate points of $Y$.
\end{fact}

\begin{proof}[Proof of Fact \ref{fact: functionals separate}] Let $y\in Y$ be such that $\langle e^*_j,y\rangle=0$ for every $j\in\N_0$ and let us write $y=\sum_{i=0}^n a_i\,\tilde{e}_{\alpha_i}$, where $\{\alpha_0,\dots,\alpha_n\}\subseteq[\omega,\Gamma)$. Then, for each $j=0,\dots,n$,

$$0=\langle e^*_j,y\rangle=\sum_{i=0}^n a_i(q_{\alpha_i})^j;$$
in matrix form, the above equations read

$$\begin{pmatrix} 1 & 1 &\dots& 1 \\ q_{\alpha_0} & q_{\alpha_1} &\dots& q_{\alpha_n} \\ \vdots &\vdots&& \vdots\\ (q_{\alpha_0})^n & (q_{\alpha_1})^n &\dots& (q_{\alpha_n})^n \end{pmatrix} \cdot \begin{pmatrix}a_0\\a_1\\ \vdots\\a_n\end{pmatrix} = \begin{pmatrix}0\\ \vdots\\0\end{pmatrix}.$$

By the classical result that Vandermonde matrices are non-singular\footnote{Notice that the result is the finite-dimensional counterpart to Lemma \ref{lemma: overcomplete}. Indeed, a quick proof follows the same argument as in the lemma: if the determinant were null, there would be a non-zero vector in $\R^{n+1}$ orthogonal to each column of the matrix. But this would yield a non-zero polynomial of degree $n$ with $n+1$ roots, a contradiction.}, it then follows $a_0=\dots=a_n=0$, whence $y=0$.
\end{proof}

Let us now pass to the description of the second dense subspace $Z$ of $X$,
which is merely $Z:={\rm span}\{e_\alpha\}_{\alpha<\Gamma}$ (although we could equally well consider the linear span of any M-basis of $X$). The property we shall need of the subspace $Z$ is proved in the forthcoming lemma.

\begin{lemma}\label{lemma: no sequence separates} Let $\{v_\alpha;\p_\alpha\}_{\alpha\in\Gamma}$ be an M-basis for a WLD Banach space $X$, set $Z:={\rm span}\{v_\alpha\}_{\alpha\in\Gamma}$ and let $Z_0$ be a non-separable subspace of $Z$. Then no sequence of functionals separates points of $Z_0$.
\end{lemma}

\begin{proof}[Proof of Lemma \ref{lemma: no sequence separates}] Let us assume, by contradiction, that there exists a sequence $(\psi_j)_{j=1}^\infty$ of functionals that separates points of $Z_0$. Let us also select an uncountable linearly independent subset $(w_\beta)_{\beta<\omega_1}$ of $Z_0$. By definition, we may find, for each $\beta<\omega_1$, a finite subset $F_\beta$ of $\Gamma$ and scalars $(w_\beta^\alpha)_{\alpha\in F_\beta}$ such that
$$w_\beta =\sum_{\alpha\in F_\beta}w_\beta^\alpha\,v_\alpha.$$

The $\Delta$-system lemma (\cite[Lemma III.2.6]{kunen}) allows us to assume, up to passing to an uncountable subset of $\omega_1$, the existence of a finite subset $\Delta$ of $\Gamma$ such that $F_\beta\cap F_\gamma=\Delta$ whenever $\beta$ and $\gamma$ are distinct ordinals, smaller than $\omega_1$. Then the sets $(F_\beta\setminus\Delta)_{\beta<\omega_1}$ are evidently disjoint and non-empty, due to the linear independence of the vectors $w_\beta$. Consequently, when we denote by $N$ the countable set $N:=\cup_{j=1}^\infty {\rm supp}\,\psi_j$, we see that only countably many sets $F_\beta\setminus\Delta$ can intersect $N$. Therefore, up to passing to an uncountable subset of $\omega_1$, we can additionally assume that
\begin{equation}\label{eq: out of Delta}
(F_\beta\setminus\Delta)\cap N=\emptyset\qquad (\beta<\omega_1).
\end{equation}

Let us now consider the vectors
$$w_\beta\flag_\Delta:=\sum_{\alpha\in \Delta}w_\beta^\alpha\,v_\alpha\in {\rm span}\{v_\alpha\}_{\alpha\in\Delta};$$
since these uncountably many vectors belong to a finite-dimensional vector space, we may express the zero vector as a linear combination thereof. In other words, there exists a vector $w\in {\rm span}\{w_\beta\}_{\beta<\omega_1}$, which is non-zero (due to the linear independence of the vectors $w_\beta$) and $w\flag_\Delta=0$. In light of (\ref{eq: out of Delta}), we then derive that $w\flag_N=0$, whence $\langle\psi_j,w\rangle=0$, for each $j\in\N$, which is the desired contradiction.
\end{proof}

It is now immediate to conclude the proof, by checking that the subspaces $Y$ and $Z$ have no non-separable subspace in common. Indeed, Fact \ref{fact: functionals separate} yields that every subspace of $Y$ admits a sequence of functionals that separates points, while no non-separable subspace of $Z$ does, in light of Lemma \ref{lemma: no sequence separates}. Therefore, no two non-separable subspaces of $Y$ and $Z$ respectively can be isomorphic.
\end{proof}

Before we proceed with the second theorem of the section, let us add a few comments on the above argument. First of all, it is evident that the system $\{\tilde{e}_\alpha,e^*_\alpha\} _{\omega\leq\alpha<\Gamma}$ constructed above is not an M-basis for $X$; this is clear from Lemma \ref{lemma: no sequence separates}, or from the fact that the functionals $\{e^*_\alpha\} _{\omega\leq\alpha<\Gamma}$ do not separate points of $X$. On the other hand, $\{\tilde{e}_\alpha,e^*_\alpha\} _{\omega\leq\alpha<\Gamma}$ is evidently an M-basis for $Y$ (and, in particular, $Y$ contains an uncountable biorthogonal system).\smallskip

Let us also observe that, although the non-separable subspaces of $Y$ and $Z$ are mutually non-isomorphic, it is easy to construct separable isomorphic subspaces of $Y$ and $Z$. More precisely, every separable subspace of $Z$ is isomorphic to a subspace of $Y$ and vice versa, as we prove in the fact below.
\begin{fact} Let $Y$ and $Z$ be the subspaces constructed in Theorem \ref{th: WLD small}. Then every separable subspace of $Z$ is isomorphic to a subspace of $Y$ and, vice versa, every separable subspace of $Y$ is isomorphic to a subspace of $Z$.
\end{fact}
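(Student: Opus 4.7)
The plan is to handle both inclusions by a small-perturbation argument parallel to the one used in Theorem \ref{Th: densely for separable}. In each direction, the first step is to show that a separable subspace $W$ of $Z$ (respectively, of $Y$) is actually contained in the linear span of only countably many basis vectors; the second step is to perturb those countably many basis vectors into the other dense subspace and restrict the resulting isomorphism to $W$.

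For a separable $W\subseteq Z$, the localisation step goes as follows: take a countable dense subset $\{w_n\}\subseteq W$ and set $A:=\bigcup_n{\rm supp}(w_n)$, a countable subset of $\Gamma$ (each $w_n$ has finite support in the M-basis $\{e_\alpha\}$). Continuity of $e^*_\beta$ then forces $W\subseteq{\rm span}\{e_\alpha\}_{\alpha\in A}$, because any extra index $\beta\in{\rm supp}(w)\setminus A$ for some $w\in W$ would satisfy $\langle e^*_\beta,w\rangle\neq 0$ while $\langle e^*_\beta,w_n\rangle = 0$ for every $n$, contradicting $w_n\to w$. The analogous statement for $W\subseteq Y$ rests on the identity $\langle e^*_\alpha,\tilde e_{\alpha'}\rangle = \delta_{\alpha,\alpha'}$ valid whenever $\alpha,\alpha'\in[\omega,\Gamma)$, which holds because $g_{q_{\alpha'}}$ is supported on the first $\omega$ coordinates of the M-basis; the identical continuity argument then yields a countable $B\subseteq[\omega,\Gamma)$ with $W\subseteq{\rm span}\{\tilde e_\alpha\}_{\alpha\in B}$.

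Once the support is localised, the perturbation step is a reprise of the proof of Theorem \ref{Th: densely for separable}. Enumerating $A=\{\alpha_n\}_{n=1}^\infty$, density of $Y$ in $X$ lets us pick $y_{\alpha_n}\in Y$ with $\|y_{\alpha_n}-e_{\alpha_n}\|\cdot\|e^*_{\alpha_n}\|<2^{-(n+1)}$; the linear extension $T(e_{\alpha_n}):=y_{\alpha_n}$ then satisfies $\|Tx-x\|\leq\tfrac12\|x\|$ on ${\rm span}\{e_\alpha\}_{\alpha\in A}$, hence is an isomorphism onto its image in $Y$, and restricting $T$ to $W$ gives the desired embedding. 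The reverse direction is completely parallel: enumerate $B=\{\beta_n\}$, pick $z_{\beta_n}\in Z$ with $\|z_{\beta_n}-\tilde e_{\beta_n}\|\cdot\|e^*_{\beta_n}\|<2^{-(n+1)}$, and use the biorthogonal system $\{\tilde e_\alpha;e^*_\alpha\}_{\alpha\in B}$ in place of the M-basis. The only step that I expect to require real care is the support claim for the $Y$-direction, because $\{\tilde e_\alpha;e^*_\alpha\}_{\omega\leq\alpha<\Gamma}$ is \emph{not} an M-basis of $X$ (as was noted immediately after the proof of Theorem \ref{th: WLD small}); what rescues the argument is precisely that the restrictions $e^*_\alpha|_Y$ still biorthogonalise $\{\tilde e_\alpha\}$ and carry a uniform $X^*$-bound, so that the perturbation inequality closes as usual. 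I do not anticipate any further substantive obstacle.
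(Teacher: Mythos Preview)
Your argument is correct and follows the same overall structure as the paper's proof: localise a separable subspace to the span of countably many (tilded) basis vectors, then run the small-perturbation lemma. The localisation step you give is exactly the paper's, including the observation in the $Y$-direction that $\{\tilde e_\alpha;e^*_\alpha\}_{\omega\leq\alpha<\Gamma}$ is biorthogonal on $Y$ even though it is not an M-basis of $X$.

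The one genuine difference is in the perturbation step. The paper, having shown $W\subseteq{\rm span}\{e_\alpha\}_{\alpha\in A}$, does \emph{not} perturb the ambient vectors $e_{\alpha_n}$; instead it invokes Markushevich's theorem to extract an M-basis $\{z_n;z^*_n\}$ of $W$ itself with ${\rm span}\{z_n\}=W$, and perturbs \emph{those} vectors into $Y$. Your version perturbs the ambient $e_{\alpha_n}$ (respectively $\tilde e_{\beta_n}$) and then restricts the resulting isomorphism to $W$. Your route is slightly more elementary, since it avoids the appeal to Markushevich's theorem; the paper's route has the minor aesthetic advantage that the perturbed map is defined on $W$ directly and surjects onto a span of the chosen $y_n$'s. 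Both are valid and neither hides any difficulty.
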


\begin{proof} Let $Z_0$ be a separable subspace of $Z$. We first observe that $Z_0$ admits a countable Hamel basis. Indeed, if $(\bar{z}_n)_{n=1}^\infty$ is a dense sequence in $Z_0$, $N:=\cup_{n=1}^\infty{\rm supp}\,\bar{z}_n$ is a countable subset of $\Gamma$ and every element of $Z_0$ has support contained in $N$. Therefore, $Z_0\subseteq {\rm span}\{e_\alpha\}_{\alpha\in N}$.

Secondly, according to the classical Markushevich theorem (see, \emph{e.g.}, \cite[Lemma 1.21]{HMVZ}), we can select an M-basis for $Z_0$ that linearly spans $Z_0$: there exists an M-basis $\{z_n;z^*_n\}_{n=1}^\infty$ for $Z_0$ such that $Z_0={\rm span}\{z_n\}_{n=1}^\infty$. We then argue as in the proof of Theorem \ref{Th: densely for separable}. Select vectors $(y_n)_{n=1}^\infty$ in $Y$ such that $\|z_n-y_n\|\cdot\|z^*_n\| \leq2^{-(n+1)}$ ($n\in\N$) and observe that the correspondence $z_n\mapsto y_n$ defines an isomorphism from $Z_0$ onto ${\rm span}\{y_n\}_{n=1}^\infty\subseteq Y$.\smallskip

The proof of the `vice versa' assertion is essentially identical: if $Y_0$ is a separable subspace of $Y$ and $(y_n)_{n=1}^\infty$ is a dense sequence in $Y_0$, there is a countable set $N\subseteq\Gamma$ such that $(y_n)_{n=1}^\infty\subseteq{\rm span}\{\tilde{e}_\alpha\} _{\alpha\in N}$. Therefore, for each $y\in Y_0$, $\langle e^*_\alpha, y\rangle=0$ whenever $\omega\leq\alpha<\Gamma$ and $\alpha\notin N$. Since every vector in $Y$ is a linear combination of the vectors $\{\tilde{e}_\alpha\}_{\omega\leq\alpha<\Gamma}$, it follows that $Y_0\subseteq{\rm span}\{\tilde{e}_\alpha\} _{\alpha\in N}$ and we can argue as in the previous case.
\end{proof}

We shall next deduce from Theorem \ref{th: WLD small} the validity of Theorem A for every uncountable cardinality. Let us repeat the statement of the result under consideration, for convenience of the reader.
\begin{theorem}\label{Th: WLD large} Let $X$ be a non-separable WLD Banach space. Then there are dense subspaces $Y$ and $Z$ of $X$ that are not densely isomorphic.
\end{theorem}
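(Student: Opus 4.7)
The strategy is to reduce to Theorem \ref{th: WLD small} by isolating an $\omega_1$-density complemented subspace $X_0$ of $X$, applying that theorem inside $X_0$, and then propagating the distinction to all of $X$ by means of a bounded projection furnished by a PRI. Fix an M-basis $\{e_\gamma;e^*_\gamma\}_{\gamma<\kappa}$ of $X$ countably supporting $X^*$ (here $\kappa:={\rm dens}\,X$) and, compatibly with it, a PRI so that $P:=P_{\omega_1}$ is a norm-one coordinate projection with $P(e_\gamma)=e_\gamma$ for $\gamma<\omega_1$ and $P(e_\gamma)=0$ for $\gamma\geq\omega_1$; put $X_0:=P(X)=\overline{\rm span}\{e_\gamma\}_{\gamma<\omega_1}$, so that $\ker P=\overline{\rm span}\{e_\gamma\}_{\gamma\geq\omega_1}$ and, by biorthogonality, $e^*_j\flag_{\ker P}=0$ for every $j<\omega_1$. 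Choose a partition $[0,\omega_1)=\Gamma_A\sqcup\Gamma_C$ with $|\Gamma_A|=\omega$ and $|\Gamma_C|=\omega_1$, pick $(q_\alpha)_{\alpha\in\Gamma_C}\subseteq(0,1/2)$ injective with dense range, and, mimicking the proof of Theorem \ref{th: WLD small} inside $X_0$, set $\tilde e_\alpha:=e_\alpha+\sum_{j\in\Gamma_A}q_\alpha^{\pi(j)}e_j$ for a bijection $\pi\colon\Gamma_A\to\omega$. Define
\[Y:={\rm span}\bigl(\{\tilde e_\alpha\}_{\alpha\in\Gamma_C}\cup\{e_\gamma\}_{\omega_1\leq\gamma<\kappa}\bigr),\qquad Z:={\rm span}\{e_\gamma\}_{\gamma<\kappa}.\]
Both are dense in $X$: the $\tilde e_\alpha$-part of $Y$ is dense in $X_0$ by Claim \ref{claim: span dense} applied inside the WLD space $X_0$, while the rest is dense in $\ker P$.

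To forbid dense isomorphism, I would employ the isomorphism-invariant property ${\rm CouSS}(V)\equiv$ \emph{$V$ admits a non-separable subspace with a countable separating family of functionals}. By Lemma \ref{lemma: no sequence separates}, every subspace of $Z$ (hence every dense one) fails ${\rm CouSS}$. The core of the proof is therefore to verify that every dense subspace $Y'\subseteq Y$ satisfies ${\rm CouSS}$. For such $Y'$, the image $P(Y')\subseteq P(Y)={\rm span}\{\tilde e_\alpha\}_{\alpha\in\Gamma_C}$ is dense in $X_0$ (hence non-separable by continuity and surjectivity of $P$), and Fact \ref{fact: functionals separate}, applied inside $X_0$, shows that $(e^*_j)_{j\in\Gamma_A}$ separates $P(Y')$. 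I would then pick, via the axiom of choice, any algebraic linear section $\sigma\colon P(Y')\to Y'$ of the surjection $P\flag_{Y'}$ and set $V':=\sigma(P(Y'))\subseteq Y'$. A countable norm-dense subset of $V'$ would project under the continuous map $P$ onto a countable dense subset of $P(V')=P(Y')$, contradicting non-separability of $P(Y')$; so $V'$ is non-separable. Moreover, for $v=\sigma(w)\in V'\setminus\{0\}$ one has $w=P(v)\neq 0$ and $v-w\in\ker P$, whence $e^*_j(v)=e^*_j(w)$ for every $j\in\Gamma_A$; since $(e^*_j)_{j\in\Gamma_A}$ separates $P(Y')\ni w$, some $e^*_j(v)$ is non-zero, so $(e^*_j)_{j\in\Gamma_A}$ separates $V'$.

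Combining these, $V'\subseteq Y'$ is non-separable with a countable separating family, so ${\rm CouSS}(Y')$ holds. Isomorphism invariance of ${\rm CouSS}$ then yields $Y'\not\cong Z'$ for every pair of dense subspaces $Y'\subseteq Y$, $Z'\subseteq Z$, and hence $Y$ and $Z$ are not densely isomorphic. The main obstacle I foresee is the technical requirement that the PRI and the M-basis of $X$ be coordinated so that $P_{\omega_1}$ is literally the coordinate projection: this compatibility, which is the actual use of the PRI result quoted in the Preliminaries, is what makes the identity $\ker P=\overline{\rm span}\{e_\gamma\}_{\gamma\geq\omega_1}$ (and hence the vanishing of $(e^*_j)_{j\in\Gamma_A}$ on $\ker P$) available and drives the crucial computation $e^*_j(v)=e^*_j(w)$ above.
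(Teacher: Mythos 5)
Your proposal follows the paper's proof in all essentials: reduce to the density-$\omega_1$ case via a PRI-provided bounded projection $P$ onto a complemented subspace $X_0$ with ${\rm dens}\,X_0=\omega_1$, run the machinery of Claim \ref{claim: span dense} and Fact \ref{fact: functionals separate} inside $X_0$ to produce a dense subspace of $X_0$ admitting a countable separating family of functionals, take $Y$ to be its span together with (a dense subspace of) $\ker P$ and $Z={\rm span}\{e_\gamma\}_{\gamma<\kappa}$, and conclude by Lemma \ref{lemma: no sequence separates}. There is, however, one step that fails as literally written: you choose $(q_\alpha)_{\alpha\in\Gamma_C}\subseteq(0,1/2)$ merely ``injective with dense range'', and this is not enough for the density argument of Claim \ref{claim: span dense} that you invoke. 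That proof needs the set $\Gamma_{q,\e}=\{\alpha\colon |q-q_\alpha|<\e\}$ to be \emph{uncountable} for every $q$ and $\e$, since Lemma \ref{lemma: conv WLD} is applied to the M-basis $\{e_\alpha\}_{\alpha\in\Gamma_{q,\e}}$ of a \emph{non-separable} WLD space; with only a dense range, $\Gamma_{q,\e}$ can be countable (even for every $q$), $0$ need not lie in the closed convex hull of $\{e_\alpha\}_{\alpha\in\Gamma_{q,\e}}$, and the approximation of $g_q$ collapses. The repair is immediate, exactly as in Theorem \ref{th: WLD small}: since $|\Gamma_C|=\omega_1$ and $(0,1/2)$ is second countable, choose the $q_\alpha$ so that every open subset of $(0,1/2)$ contains uncountably many of them.

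Beyond that slip, your two deviations from the paper are worth comparing. Your section trick is not pedantry: the paper applies the isomorphism $T\colon Y_0\to Z_0$ to $P(Y_0)$, which tacitly presumes $P(Y_0)\subseteq Y_0$, an inclusion that can fail for an arbitrary dense subspace $Y_0$ of $Y$; your $V':=\sigma(P(Y'))\subseteq Y'$ (non-separable since $P(V')=P(Y')$ is, and separated by a countable family) produces the required non-separable separated subspace \emph{inside} the domain of $T$, so your version is actually the more careful one at this point. On the other hand, the compatibility between the PRI and the M-basis that you identify as the main obstacle, while indeed available (in a WLD space the PRI can be taken subordinated to the M-basis, making $P$ a coordinate projection with $\ker P=\overline{\rm span}\{e_\gamma\}_{\gamma\geq\omega_1}$), is avoidable: replace the separating family $(e^*_j)_{j\in\Gamma_A}$ by $(e^*_j\circ P)_{j\in\Gamma_A}$. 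These functionals vanish on $\ker P$ by construction, the computation $\langle e^*_j\circ P, v\rangle=\langle e^*_j, Pv\rangle$ separates $V'$ just as before, and then \emph{any} bounded projection onto \emph{any} complemented $X_0$ of density $\omega_1$, with any M-basis of $X_0$ used for the construction of $\tilde Y$ and with $Y={\rm span}\{\tilde Y,\ker P\}$ as in the paper, does the job — no coordination whatsoever between $P$ and the basis of $X$ is needed.
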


\begin{proof} Since the case when ${\rm dens}\,X=\omega_1$ follows from Theorem \ref{th: WLD small}, we can assume that $\Gamma:={\rm dens}\,X\geq\omega_2$. Moreover, $X$ admits a projectional resolution of the identity, whence we can select a complemented (closed) subspace $X_0$ of $X$, with ${\rm dens}\,X_0=\omega_1$. Let us select a bounded projection $P$ from $X$ onto $X_0$ and write $X\simeq X_0\oplus X_1$, where $X_1=\ker P$. Finally, let us fix an M-basis $\{e_\alpha;e^*_\alpha\}_{\alpha<\Gamma}$ for $X$.

Following the argument in the proof of Theorem \ref{th: WLD small} (in particular, Claim \ref{claim: span dense} and Fact \ref{fact: functionals separate}), we can select a dense subspace $\tilde{Y}$ of $X_0$ that admits a sequence of functionals that separates points. We then set $Y:={\rm span}\{\tilde{Y},X_1\}$ and $Z:={\rm span}\{e_\alpha\}_{\alpha<\Gamma}$. Evidently, $Y$ and $Z$ are dense subspaces of $X$; moreover, it is plain that $P(Y)=\tilde{Y}$. Therefore, if $Y_0$ is any dense subspace of $Y$, then $P(Y_0)$ is a dense subspace of $\tilde{Y}$. In turn, this yields that $P(Y_0)$ is a non-separable subspace of $Y$ and that there exists a sequence of functionals that separates points of $P(Y_0)$.

Finally, assume by contradiction that $Y$ and $Z$ are densely isomorphic. Then, there are dense subspaces $Y_0$ of $Y$ and $Z_0$ of $Z$ and an isomorphism $T\colon Y_0\to Z_0$. Therefore, $T (P(Y_0))$ is a non-separable subspace of $Z$ and it admits a separating sequence of functionals. However, this is in contradiction with Lemma \ref{lemma: no sequence separates} and such contradiction completes the proof.
\end{proof}

\section{Uncountable biorthogonal systems}\label{Sec: Th B}
This section is dedicated to an alternative construction, under the assumption of the Continuum Hypothesis, of two non densely isomorphic, dense subspaces of a WLD Banach space with density character $\omega_1$. In the first and main part of the section, we shall prove Theorem B, whose statement is recalled below; in the second part, we show why the present result offers a more precise version of Theorem A.
\begin{theorem}[CH]\label{Th: CH No biorth} Let $X$ be a WLD Banach space with ${\rm dens}\,X=\omega_1$. Then there exists a dense subspace $Y$ of $X$ that contains no uncountable biorthogonal system.
\end{theorem}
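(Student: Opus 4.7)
The plan is a transfinite construction under CH of the dense subspace $Y$, indexed by $\omega_1$, where at each stage a new generating vector is added so as to both make $Y$ dense in $X$ and destroy a pre-enumerated candidate for an uncountable biorthogonal system.

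I would first fix a normalised M-basis $\{e_\alpha; e^*_\alpha\}_{\alpha<\omega_1}$ of $X$ countably supporting $X^*$, and set $X_\alpha := \overline{\rm span}\{e_\beta\}_{\beta<\alpha}$, obtaining an increasing chain of separable closed subspaces with dense union. Under CH, since every functional has countable support, $|X| = |X^*| = \omega_1$, which permits enumerations $X = \{v_\alpha\}_{\alpha<\omega_1}$ and of the relevant ``threat'' data (pairs of $\omega_1$-long sequences of vectors and functionals). The subspace $Y$ will be the linear span of a sequence $(y_\alpha)_{\alpha<\omega_1}$ built inductively, with each $y_\alpha$ chosen as a small perturbation of $v_\alpha$, by a linear combination of the previously defined $y_\beta$'s together with a small compensating vector. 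The perturbation principle of Theorem \ref{Th: densely for separable} then guarantees that $Y$ is dense in $X$, while the perturbations simultaneously encode linear dependencies aimed at defeating the enumerated candidate biorthogonal system.

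To verify the absence of an uncountable biorthogonal system in $Y$, I would argue by contradiction. Suppose $\{x_\gamma; x^*_\gamma\}_{\gamma<\omega_1} \subseteq Y \times X^*$ were such a system. Each $x_\gamma$ is a finite linear combination of the generators $y_\beta$, and each $x^*_\gamma$ is countably supported in the M-basis. A $\Delta$-system lemma applied to the finite index sets appearing in the expansions of the $x_\gamma$'s, combined with a pressing-down argument on the countable functional supports, would isolate uncountably many $\gamma$'s whose ``coordinate data'' align on a common root. The dependencies encoded at the stages matching these $\gamma$'s would then produce a non-trivial relation expressing some $x_{\gamma_0}$ within $\overline{\rm span}\{x_\gamma\}_{\gamma\neq\gamma_0}$, contradicting the minimality that any biorthogonal system entails.

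The main obstacle is the careful design of the perturbations: each $y_\alpha - v_\alpha$ must be small in norm (to retain density) yet rich enough that, aggregated over the $\omega_1$ stages, every potentially arising uncountable biorthogonal system is neutralised. Although there are up to $2^{\omega_1}$ such systems, the WLD hypothesis, together with CH, effectively collapses the ``essentially distinct'' candidates to an $\omega_1$-indexed list, enabling a one-per-stage diagonalisation. The subtler ingredient is the bookkeeping at limit stages, ensuring that the biorthogonality-breaking dependency survives passage to the limit and that no new uncountable biorthogonal system can be assembled from vectors appearing cofinally unconstrained; this is precisely the point at which the mechanism of \cite[Theorem 5.1]{HKR} is borrowed and dualised, with the added simplification that here we act on a dense subspace of a fixed WLD Banach space rather than constructing a Banach space from scratch.
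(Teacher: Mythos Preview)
Your broad outline --- transfinite construction under CH, use of $|X^*|=\omega_1$, a $\Delta$-system argument in the contradiction --- matches the paper's architecture, but the central mechanism has a genuine gap. You propose to diagonalise against an $\omega_1$-list of ``candidate biorthogonal systems'' (pairs of $\omega_1$-long sequences of vectors and functionals), acknowledge there are $2^{\omega_1}$ of them, and then assert that WLD together with CH ``effectively collapses'' the essentially distinct ones to an $\omega_1$-indexed list. That collapse is never justified, and it is not what the paper does. The paper does \emph{not} enumerate biorthogonal systems at all: it enumerates $X^*$ as $(\varphi_\alpha)_{\alpha<\omega_1}$, and at stage $\gamma$ it chooses a single scalar $\lambda_\gamma\in(0,1)$ entering $\tilde e_\gamma:=e_\gamma+\sum_k\lambda_\gamma^k e_{\sigma_\gamma(k)}$ so that the following non-singularity condition {\bf (NS)} holds: for every linearly independent $\varphi_{\alpha_1},\dots,\varphi_{\alpha_N}$ with indices and supports below $\beta_1<\dots<\beta_N\leq\gamma$, one has $\det\bigl(\langle\varphi_{\alpha_i},\tilde e_{\beta_j}\rangle\bigr)_{i,j=1}^N\neq0$. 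At stage $\gamma$ this is a countable family of non-trivial analytic conditions in the variable $\lambda_\gamma$, each excluding only countably many values, so a good $\lambda_\gamma$ exists.

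The key point you are missing is that {\bf (NS)} is a finitary condition strong enough to kill \emph{all} uncountable biorthogonal systems at once, not one per stage. Given $\{u_\alpha;g_\alpha\}_{\alpha<\omega_1}\subseteq Y\times X^*$, the $\Delta$-system refinement on the finite $\tilde e$-supports $F_\alpha$ of the $u_\alpha$ yields a root $\Delta$ with $|F_\alpha|=n-1$; one then picks $n$ indices $\theta_1<\dots<\theta_n$ with $F_{\theta_i}\setminus\Delta$ lying above the enumeration-indices and supports of $g_1,\dots,g_{n^2}$, and a dimension count produces a non-zero $u\in{\rm span}\{u_{\theta_i}\}_{i\leq n}$ whose $\tilde e$-support misses $\Delta$ entirely. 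Biorthogonality gives $\langle g_j,u\rangle=0$ for $j\leq n^2$, and since $u$ is a combination of at most $n^2$ vectors $\tilde e_{\beta}$ satisfying the hypotheses of {\bf (NS)} relative to $g_1,\dots,g_{n^2}$, the non-singular matrix forces $u=0$, a contradiction. Your ``encoded linear dependencies'', ``one-per-stage diagonalisation'' and ``bookkeeping at limit stages'' are placeholders for exactly this determinant condition; without making it (or an equivalent finitary invariant) explicit, the sketch does not constitute a proof.
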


\begin{proof} Let us select an M-basis $\{e_\alpha;e^*_\alpha\} _{\alpha<\omega_1}$ for $X$ and assume $\|e_\alpha\|=1$ for each $\alpha<\omega_1$. We shall start by constructing a new family $(\tilde{e}_\alpha)_{\alpha<\omega_1}$ of vectors in $X$. Fix an injective long sequence $(\lambda_\alpha)_{\alpha<\omega_1}\subseteq (0,1)$ and choose, for every $\alpha<\omega_1$, an enumeration $\sigma_\alpha$ of $[0,\alpha)$, \emph{i.e.}, a bijection $\sigma_\alpha\colon \omega\to[0,\alpha)$ when $\alpha$ is infinite, or $\sigma_\alpha\colon |\alpha|-1\to[0,\alpha)$ when $\alpha$ is a finite ordinal. We may now define vectors $\tilde{e}_\alpha$ ($\alpha<\omega_1$) as follows:

$$\tilde{e}_0 := e_0;$$
$$\tilde{e}_\alpha := e_\alpha+ \sum_{k=0}^{|\alpha|-1} (\lambda_\alpha)^k e_{\sigma_\alpha(k)}\qquad(1\leq\alpha<\omega);$$
$$\tilde{e}_\alpha := e_\alpha+ \sum_{k=0}^{\infty}(\lambda_\alpha) ^k e_{\sigma_\alpha(k)} \qquad(\omega\leq\alpha<\omega_1).$$
Plainly, the vectors $\tilde{e}_\alpha$ ($\alpha<\omega_1$) constitute a linearly independent subset of $X$.\smallskip

The above enumerations may be chosen arbitrarily and the subsequent argument will not depend on any specific such choice. On the other hand, a substantial part of the argument to be presented will involve explaining how to properly choose the coefficients $\lambda_\alpha$. Prior to this, let us observe that the vectors $(\tilde{e}_\alpha)_{\alpha<\omega_1}$ span a linearly dense subset of $X$, regardless of the choice of the coefficients $\lambda_\alpha$.
\begin{fact}\label{fact: lin dense} ${\rm span}\{\tilde{e}_\alpha\}_{\alpha<\omega_1}$ is linearly dense in $X$.
\end{fact}

\begin{proof}[Proof of Fact \ref{fact: lin dense}] We shall actually prove, by transfinite induction, the stronger assertion that $\overline{\rm span}\{e_\alpha\}_{\alpha<\beta} = \overline{\rm span}\{\tilde{e}_\alpha\}_{\alpha<\beta}$ for every $\beta\leq\omega_1$. Let us therefore assume the validity of the above equality for every $\beta<\gamma$, where $\gamma\leq\omega_1$. In the case where $\gamma$ is a limit ordinal, we immediately derive that $\{e_\alpha\}_{\alpha<\gamma} \subseteq \overline{\rm span}\{\tilde{e}_\alpha\}_{\alpha<\gamma}$, whence $\overline{\rm span}\{e_\alpha\}_{\alpha<\gamma} = \overline{\rm span}\{\tilde{e}_\alpha\}_{\alpha<\gamma}$ follows (the `$\supseteq$' inclusion being trivial).

If, on the other hand, $\gamma$ is a successor ordinal, say $\gamma=\eta+1$, we have in particular $\overline{\rm span}\{e_\alpha\} _{\alpha<\eta} = \overline{\rm span}\{\tilde{e}_\alpha\}_{\alpha<\eta}$. By the very definition, we also have $\tilde{e}_\eta - e_\eta\in\overline{\rm span}\{e_\alpha\}_{\alpha<\eta}=\overline{\rm span}\{\tilde{e}_\alpha\} _{\alpha<\eta}$, whence $e_\eta\in\overline{\rm span}\{\tilde{e}_\alpha\} _{\alpha\leq\eta}$. Plainly, this yields $\overline{\rm span}\{e_\alpha\} _{\alpha\leq\eta} = \overline{\rm span}\{\tilde{e}_\alpha\}_{\alpha\leq\eta}$, namely $\overline{\rm span}\{e_\alpha\}_{\alpha<\gamma} = \overline{\rm span}\{\tilde{e} _\alpha\}_{\alpha<\gamma}$, which is the desired conclusion.
\end{proof}

We shall now describe how to choose the sequence $(\lambda_\alpha)_{\alpha<\omega_1}$. Since every M-basis in the WLD space $X$ countably supports $X^*$, the evaluation map
$$\p\mapsto (\langle\p,e_\alpha\rangle)_{\alpha<\omega_1}$$
defines a bounded, linear injection of $X^*$ into $\ell_\infty^c(\omega_1)$. (By $\ell_\infty^c(\omega_1)$ we understand the closed subspace of $\ell_\infty(\omega_1)$ comprising all vectors in $\ell_\infty(\omega_1)$ with countable support.) Moreover, it is elementary to verify that $\ell_\infty^c(\omega_1)$ has cardinality the continuum. Indeed, let us denote by $\ell_\infty(\alpha)$ the subspace of $\ell_\infty^c(\omega_1)$ comprising vectors with support contained in $[0,\alpha)$ ($\alpha<\omega_1$). Evidently, $\ell_\infty^c(\omega_1)=\cup_{\alpha<\omega_1}\ell_\infty(\alpha)$ and $\ell_\infty(\alpha)$ is naturally isometric to $\ell_\infty$ (when $\omega\leq\alpha<\omega_1$), whence $|\ell_\infty(\alpha)|=\mathfrak{c}$; consequently, $|\ell_\infty^c(\omega_1)|=\mathfrak{c}$ too.

Therefore, we obtain that $|X^*|=\mathfrak{c}$ and the assumption of the Continuum Hypothesis allows us to well order $X^*$ in an injective $\omega_1$-sequence $(\p_\alpha)_{\alpha<\omega_1}$. This $\omega_1$-enumeration of $X^*$ permits us to choose the parameters $(\lambda_\alpha)_{\alpha<\omega_1}$, as follows.

\begin{claim}\label{claim: nonzero det} It is possible to choose the parameters $(\lambda_\alpha)_{\alpha< \omega_1}$ in such a way that the following assertion is satisfied.
\begin{description}
\item[(NS)] For every $N\in\N$ and for every choice of ordinal numbers $\alpha_1, \dots,\alpha_N<\omega_1$ and $\beta_1,\dots,\beta_N<\omega_1$ with the properties that:
\begin{romanenumerate}
\item $\{\p_{\alpha_1},\dots,\p_{\alpha_N}\}$ is a linearly independent set;
\item $\beta_1<\beta_2<\dots<\beta_N$;
\item $\alpha_1,\dots,\alpha_N<\beta_1$;
\item ${\rm supp}\,\p_{\alpha_1},\dots,{\rm supp}\,\p_{\alpha_N}<\beta_1$;
\end{romanenumerate}
one has:
$$\det\left(\left(\left\langle\p_{\alpha_i},\tilde{e}_{\beta_j}\right\rangle\right)_{i,j=1}^N\right)\neq0.$$
\end{description}
\end{claim}

\begin{proof}[Proof of Claim \ref{claim: nonzero det}]  We shall select $(\lambda_\alpha)_{\alpha<\omega_1}$ arguing by transfinite induction on $\gamma:=\beta_N<\omega_1$. Let us observe preliminarily that when $\gamma=0$, condition {\bf (NS)} is empty, while $\tilde{e}_0=e_0$, regardless of the choice of $\lambda_0$. Let us therefore assume to have already selected $(\lambda_\alpha) _{\alpha<\gamma}$, for some $\gamma<\omega_1$ in such a way that, for every $N\in\N$ and every choice of $\alpha_1,\dots,\alpha_N<\omega_1$ and $\beta_1,\dots,\beta_N<\omega_1$ satisfying properties (i)--(iv) and such that $\beta_N<\gamma$, the corresponding determinant appearing in {\bf (NS)} is non-zero. We shall select $\lambda_\gamma$ in a way that all the above determinants are non-zero, for every choice of $N\in\N$, $\alpha_1,\dots,\alpha_N<\omega_1$ and $\beta_1,\dots,\beta_N<\omega_1$ satisfying (i)--(iv) and with $\beta_N\leq\gamma$.\smallskip

Let us therefore fix $N\in\N$ and ordinal numbers $\alpha_1,\dots,\alpha_N<\omega_1$ and $\beta_1,\dots,\beta_N<\omega_1$ that satisfy conditions (i)--(iv) and with $\beta_N=\gamma$; notice that the parameters $\lambda_{\beta_1},\dots,\lambda_{\beta_{N-1}}$ have already been determined in the previous steps of the induction and, as such, we only have to choose the parameter appearing in $\tilde{e}_\gamma$. The determinant corresponding to the above choice of $N$, $\alpha_1,\dots,\alpha_N$ and $\beta_1,\dots,\beta_N$ can be evaluated by means of Laplace's expansion theorem on the last column:

$$\det\left(\left(\langle\p_{\alpha_i},\tilde{e}_{\beta_j}\rangle\right) _{i,j=1}^N\right)= \det \begin{pmatrix} \langle\p_{\alpha_1},\tilde{e}_{\beta_1}\rangle &\dots& \langle\p_{\alpha_1},\tilde{e}_{\beta_N}\rangle\\ \vdots && \vdots\\ \langle\p_{\alpha_N},\tilde{e}_{\beta_1}\rangle &\dots& \langle\p_{\alpha_N},\tilde{e}_{\beta_N}\rangle \end{pmatrix}=$$
$$\sum_{i=1}^N(-1)^{N+i}d_i \langle\p_{\alpha_i},\tilde{e}_{\beta_N}\rangle =\left\langle \sum_{i=1}^N(-1)^{N+i}d_i\p_{\alpha_i},\tilde{e}_{\beta_N} \right\rangle,$$
where $d_i$ is the determinant of the $(N-1)\times(N-1)$ matrix obtained removing the $i$-th row and the $N$-th column from the original matrix $\left(\langle\p_{\alpha},\tilde{e}_{\beta_j}\rangle\right)_{i,j=1}^N$. (In the case when $N=1$, the above formula is again true provided that we set $d_1=1$.) The transfinite induction assumption, applied to $\{\alpha_1,\dots,\alpha_N\}\setminus\{\alpha_i\}$ and $\{\beta_1,\dots, \beta_{N-1}\}$ then leads us to the conclusion that $d_i\neq0$, for each $i=1,\dots,N$. Thereby, (i) yields that the functional $\p:=\sum_{i=1}^N (-1)^{N+i}d_i\p_{\alpha_i}$ is non-zero. Moreover, ${\rm supp}\,\p<\beta_1<\gamma$, in light of (iv); consequently,

$$\det\left(\left(\langle\p_{\alpha_i},\tilde{e}_{\beta_j}\rangle\right) _{i,j=1}^N\right)=\langle\p,\tilde{e}_\gamma\rangle=\sum_{k=0}^\infty \langle\p,e_{\sigma_\gamma(k)}\rangle\cdot(\lambda_\gamma)^k. $$

(In the case where $\gamma<\omega$, the summation is extended from $0$ to $|\gamma|-1$.) When considered as a function of $\lambda_\gamma$, the above expression is a power series whose coefficients are bounded and not all of them are null. Therefore, the correspondence
$$\lambda_\gamma\mapsto \det\left(\left(\langle\p_{\alpha_i},\tilde{e} _{\beta_j}\rangle\right) _{i,j=1}^N\right)$$
defines a non-trivial analytic function on $(-1,1)$ and, accordingly, such function has at most countably many zeros on $(-1,1)$, in light of the identity principle. Moreover, due to (ii) and (iii), there are only countably many choices for $N\in\N$, $\alpha_1,\dots,\alpha_N$ and $\beta_1,\dots,\beta_N$ that satisfy $\beta_N=\gamma$ and (i)--(iv). Therefore, taking the union of all the corresponding countable zero sets of those countably many analytic functions, we obtain a countable set $\Lambda\subseteq(-1,1)$ such that every determinant appearing in {\bf (NS)} is non-zero, for each choice of $\lambda_\gamma\in(-1,1) \setminus\Lambda$. Hence, when we select $\lambda_\gamma\in(0,1) \setminus\Lambda$ and such that $\lambda_\gamma\neq\lambda_\alpha$ for each $\alpha<\gamma$, we obtain that {\bf (NS)} is satisfied also for $\gamma$; therefore, the transfinite induction is complete and so is the proof of Claim \ref{claim: nonzero det}.
\end{proof}

Having the claim proved, we may choose parameters $(\lambda_\alpha)_{\alpha<\omega_1}$ satisfying condition {\bf (NS)} above; we then denote by $Y:={\rm span}\{\tilde{e}_\alpha\} _{\alpha<\omega_1}$, where the vectors $(\tilde{e}_\alpha)_ {\alpha<\omega_1}$ are obtained from the presently chosen sequence $(\lambda_\alpha)_{\alpha<\omega_1}$. We shall conclude the proof by showing that $Y$ is the desired dense subspace with no uncountable biorthogonal system. The density of $Y$ in $X$ being already established in Fact \ref{fact: lin dense}, we only need to prove the following claim.
\begin{claim}\label{claim: no biorth system} The subspace $Y$ contains no uncountable biorthogonal system.
\end{claim}

\begin{proof}[Proof of Claim \ref{claim: no biorth system}] Assume, by contradiction, that $Y$ contains an uncountable biorthogonal system, say $\{u_\alpha;g_\alpha\}_{\alpha<\omega_1}$; obviously, we may assume that $g_\alpha\in X^*$, for each $\alpha<\omega_1$. Moreover, by definition, for each $\alpha<\omega_1$ there exist a finite subset $F_\alpha$ of $\omega_1$ and scalars $(u_\alpha^\beta)_{\beta\in F_\alpha}$ such that
$$u_\alpha = \sum_{\beta\in F_\alpha}u_\alpha^\beta\,\tilde{e}_\beta.$$

Application of the $\Delta$-system lemma to the system of finite sets $(F_\alpha)_{\alpha<\omega_1}$ allows us to assume, up to passing to an uncountable subset of $\omega_1$ and relabelling, that there exist $n\in\N$ and a finite subset $\Delta$ of $\omega_1$ such that:
\begin{enumerate}
    \item $|F_\alpha|=n-1$, for each $\alpha<\omega_1$;
    \item $F_\alpha\cap F_\beta=\Delta$ for distinct $\alpha,\beta<\omega_1$;
    \item $\Delta < F_\alpha\setminus\Delta < F_\beta\setminus\Delta$ whenever $\alpha<\beta<\omega_1$.
\end{enumerate}
Indeed, $(1)$ and $(2)$ follow directly from the $\Delta$-system lemma and $(3)$ is achieved by a simple transfinite induction argument, based on the fact that, the sets $(F_\alpha\setminus\Delta)_{\alpha<\omega_1}$ being disjoint, only countably many of them can intersect each set of the form $[0,\beta)$ ($\beta<\omega_1$). (Note that the sets $F_\alpha\setminus\Delta$ are indeed non-empty, by virtue of the linear independence of the vectors $\{u_\alpha\}_{\alpha<\omega_1}$.)\smallskip

Let us now consider the functionals $g_1,\dots,g_{n^2}$. Since we enumerated $X^*$ in the $\omega_1$-sequence $(\p_\alpha)_{\alpha<\omega_1}$, there exist ordinals $\alpha_1,\dots,\alpha_{n^2}$ such that $g_j=\p_{\alpha_j}$ ($j=1,\dots,n^2$). Moreover, each $g_j$ is countably supported, whence we can select a countable ordinal $\Gamma$ such that
\begin{romanenumerate}
    \item ${\rm supp}\,\p_{\alpha_j}\leq\Gamma$ for each $j=1,\dots,n^2$;
    \item $\alpha_j\leq\Gamma$ for each $j=1,\dots,n^2$;
    \item $n^2\leq\Gamma$ (which is actually consequence of (ii)).
\end{romanenumerate}
According to $(3)$, we are now in position to choose $\theta_1<\omega_1$ with the properties that $\Gamma<\theta_1$ and $\Gamma < F_{\theta_1}\setminus\Delta$. Let us also select ordinals $\theta_2,\dots,\theta_n$ such that $\Gamma<\theta_1<\theta_2<\dots<\theta_n$; then $(3)$ also ensures us that
\begin{equation}\label{eq: supports slide}
\Gamma < F_{\theta_1}\setminus\Delta < F_{\theta_2}\setminus\Delta <\dots< F_{\theta_n}\setminus\Delta.
\end{equation}\smallskip

Consider now the corresponding vectors $u_{\theta_1},\dots,u_{\theta_n}$ and write them as follows:

$$u_{\theta_i}=\sum_{\beta\in F_{\theta_i}}u_{\theta_i}^\beta\,\tilde{e}_\beta = \sum_{\beta\in \Delta}u_{\theta_i}^\beta\,\tilde{e}_\beta + \sum_{\beta\in F_{\theta_i}\setminus\Delta}u_{\theta_i}^\beta\,\tilde{e}_\beta =: u_{\theta_i}\flag_\Delta + u_{\theta_i}\flag_{\Delta^\complement}.$$
Evidently, $\{u_{\theta_i}\flag_\Delta\}_{i=1}^n\subseteq {\rm span}\{\tilde{e}_\beta\}_{\beta\in\Delta}$, a vector space of dimension at most $n-1$, according to $(1)$. Therefore, there exist scalars $c_1,\dots,c_n$, not all equal to $0$, such that
\begin{equation}\label{eq: null on Delta}
\sum_{i=1}^n c_i\, u_{\theta_i}\flag_\Delta=0.
\end{equation}\smallskip

Let us finally consider the non-zero vector $u:=\sum_{i=1}^n c_i\, u_{\theta_i}$; on the one hand, (iii) yields, for each $j=1,\dots,n^2$
$$\langle\p_{\alpha_j},u\rangle=\langle g_j,u\rangle=\sum_{i=1}^n c_i\, \langle g_j, u_{\theta_i}\rangle=0.$$
On the other hand, condition $(1)$ implies that $u$ is linear combination of at most $n^2$ $\tilde{e}_\beta$'s; moreover, (\ref{eq: supports slide}) and (\ref{eq: null on Delta}) assure us that $u$ only depends on those $\tilde{e}_\beta$ for which $\Gamma<\beta$. Consequently, we may find scalars $a_1,\dots,a_{n^2}$ and ordinals $\beta_1,\dots,\beta_{n^2}$ with $\Gamma<\beta_1<\beta_2<\dots<\beta_{n^2}<\omega_1$ and such that
$$u=\sum_{i=1}^{n^2}a_i\, \tilde{e}_{\beta_i}$$
(notice that we insist the linear combination to have length exactly $n^2$, at the cost of choosing some $a_i=0$). Therefore, the equations $\langle\p_{\alpha_j},u\rangle=0$ now read

$$\begin{pmatrix} \langle\p_{\alpha_1},\tilde{e}_{\beta_1}\rangle &\dots& \langle\p_{\alpha_1},\tilde{e}_{\beta_{n^2}}\rangle\\ \vdots && \vdots\\ \langle\p_{\alpha_{n^2}},\tilde{e}_{\beta_1}\rangle &\dots& \langle\p_{\alpha_{n^2}},\tilde{e}_{\beta_{n^2}}\rangle \end{pmatrix} \cdot \begin{pmatrix}a_1\\ \vdots\\a_{n^2}\end{pmatrix} = \begin{pmatrix}0\\ \vdots\\0\end{pmatrix}.$$

However, (i) and (ii) imply that the ordinals $\{\alpha_1,\dots,\alpha_{n^2}\}$ and $\{\beta_1,\dots,\beta_{n^2}\}$ satisfy the assumptions of Claim \ref{claim: nonzero det}, which therefore yields
$$\det\left((\langle\p_{\alpha_j},\tilde{e}_{\beta_i}\rangle)_{j,i=1}^{n^2} \right)\neq0,$$
whence $a_1=\dots=a_{n^2}=0$. This ultimately implies $u=0$, a contradiction which concludes the proof of the claim.
\end{proof}\end{proof}

We shall now show that Theorem B yields an alternative way to build dense subspaces that are not densely isomorphic. More precisely, it may be used to prove the existence of two dense subspaces with no isomorphic non-separable subspaces, thereby giving an alternative proof of Theorem \ref{th: WLD small}.

Let $X$ be a WLD Banach space with ${\rm dens}\,X=\omega_1$, assume CH, and let $Y$ be a dense subspace of $X$ that contains no uncountable biorthogonal system. In order to find a dense subspace $Z$ of $X$ such that no non-separable subspace of $Z$ is isomorphic to a subspace of $Y$, we only need to construct a dense subspace $Z$ of $X$ every whose non-separable subspace contains an uncountable biorthogonal system. This is obtained in the forthcoming lemma.

\begin{lemma}\label{Lemma: Biort System in span of M-basis} Let $\{e_\alpha;e^*_\alpha\}_{\alpha\in\Gamma}$ be an M-basis for a Banach space $X$. Then every non-separable subspace of $Z:={\rm span}\{e_\alpha\} _{\alpha\in\Gamma}$ contains an uncountable biorthogonal system.
\end{lemma}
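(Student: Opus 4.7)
The plan is to exploit the two structural features that $Z_0$ has: every vector in $Z$ has finite support with respect to the M-basis, and $Z_0$ is non-separable. The key preliminary observation is that for every countable subset $N\subseteq\Gamma$, the subspace ${\rm span}\{e_\gamma\}_{\gamma\in N}$ has countable Hamel dimension, hence is separable. Since $Z_0$ is non-separable, it cannot be contained in any such ${\rm span}\{e_\gamma\}_{\gamma\in N}$; a fortiori, for every countable $N\subseteq\Gamma$ there exists $z\in Z_0$ with ${\rm supp}\,z\not\subseteq N$.

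The plan is then to build by transfinite induction on $\alpha<\omega_1$ vectors $u_\alpha\in Z_0$ and ordinals $\gamma_\alpha\in\Gamma$ as follows. Set $F_\beta:={\rm supp}\,u_\beta$ (a finite set) and $N_\alpha:=\bigcup_{\beta<\alpha}F_\beta$ (countable). By the preliminary observation, we can choose $u_\alpha\in Z_0$ with $F_\alpha\not\subseteq N_\alpha$, and we then pick $\gamma_\alpha\in F_\alpha\setminus N_\alpha$. By construction we have $\gamma_\alpha\in F_\alpha$ but $\gamma_\alpha\notin F_\beta$ for every $\beta<\alpha$; this gives the "lower triangular" half of biorthogonality, namely $\langle e^*_{\gamma_\alpha},u_\alpha\rangle\neq0$ and $\langle e^*_{\gamma_\alpha},u_\beta\rangle=0$ for $\beta<\alpha$. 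The remaining obstacle is to also control $\langle e^*_{\gamma_\alpha},u_\beta\rangle$ for $\beta>\alpha$; this is where the $\Delta$-system lemma enters.

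Apply the $\Delta$-system lemma to the family $(F_\alpha)_{\alpha<\omega_1}$ of finite sets: we obtain an uncountable $I\subseteq\omega_1$ and a finite \emph{root} $\Delta\subseteq\Gamma$ such that $F_\alpha\cap F_\beta=\Delta$ for distinct $\alpha,\beta\in I$. Discarding the smallest element of $I$ if necessary, we may assume that for each $\alpha\in I$ there is some $\beta\in I$ with $\beta<\alpha$. The point is that for such $\alpha$ the chosen $\gamma_\alpha$ cannot lie in $\Delta$: indeed $\Delta\subseteq F_\beta$ for every $\beta\in I$, while $\gamma_\alpha\notin F_\beta$ for any $\beta<\alpha$. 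Hence $\gamma_\alpha\in F_\alpha\setminus\Delta$, and for $\beta\in I$ with $\beta>\alpha$ we get $\gamma_\alpha\notin F_\alpha\cap F_\beta=\Delta$ and thus $\gamma_\alpha\notin F_\beta$ (because $\gamma_\alpha\in F_\alpha$ and $F_\alpha\cap F_\beta=\Delta$).

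Putting everything together, for $\alpha,\beta\in I$ distinct we have $\langle e^*_{\gamma_\alpha},u_\beta\rangle=0$ while $\langle e^*_{\gamma_\alpha},u_\alpha\rangle\neq0$. Setting $f_\alpha:=e^*_{\gamma_\alpha}/\langle e^*_{\gamma_\alpha},u_\alpha\rangle$ and restricting each $f_\alpha$ to $Z_0$ produces the desired uncountable biorthogonal system $\{u_\alpha;f_\alpha\}_{\alpha\in I}$ inside $Z_0$. The main obstacle in this plan is keeping the triangular construction compatible with the eventual $\Delta$-system refinement so that $\gamma_\alpha$ survives as a witness outside the root $\Delta$; all other steps are routine.
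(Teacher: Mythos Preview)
Your proof is correct and uses the same key tool as the paper (the $\Delta$-system lemma, with coordinate functionals $e^*_{\gamma_\alpha}$ serving as the dual part). The difference is in the order of operations, and the paper's version is shorter: rather than building the $u_\alpha$ by transfinite induction and then worrying about whether the pre-chosen $\gamma_\alpha$ lands outside the root $\Delta$, the paper simply starts with \emph{any} uncountable linearly independent set $(v_\beta)_{\beta<\omega_1}\subseteq Z_0$, applies the $\Delta$-system lemma to the supports $F_\beta$, and only \emph{then} picks $\gamma_\beta\in F_\beta\setminus\Delta$. Linear independence alone forces all but finitely many petals $F_\beta\setminus\Delta$ to be non-empty (since at most $|\Delta|$ of the $v_\beta$ can lie in the finite-dimensional space ${\rm span}\{e_\alpha\}_{\alpha\in\Delta}$), so no preliminary triangular construction is needed. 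In other words, the obstacle you flag as the main one --- compatibility of the inductive choice of $\gamma_\alpha$ with the later $\Delta$-system refinement --- disappears entirely if you postpone the choice of $\gamma_\alpha$ until after the refinement.
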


\begin{proof} If $Z_0$ is any non-separable subspace of $Z$, then we can choose an uncountable linearly independent set $(v_\beta)_{\beta<\omega_1}\subseteq Z_0$. By definition, for each $\beta<\omega_1$, there exist a finite subset $F_\beta$ of $\Gamma$ and non-zero scalars $(v^\alpha_\beta)_{\alpha\in F_\beta}$ such that
$$v_\beta=\sum_{\alpha\in F_\beta}v_\beta^\alpha\, e_\alpha.$$

By the $\Delta$-system lemma, we may additionally assume that there exists a finite subset $\Delta$ of $\Gamma$ such that $F_\beta\cap F_\gamma=\Delta$ for distinct $\beta,\gamma< \omega_1$. By linear independence of the $(v_\beta)_{\beta<\omega_1}$, we can also assume the sets $F_\beta\setminus\Delta$ to be non-empty; let us therefore choose $\alpha_\beta\in F_\beta\setminus\Delta$ ($\beta<\omega_1$). Finally, consider the functionals
$$\p_\beta:=\frac{1}{v_\beta^{\alpha_\beta}}e^*_{\alpha_\beta}$$
and observe that $\{v_\beta;\p_\beta\}_{\beta<\omega_1}$ is evidently a biorthogonal system in $Z_0$.
\end{proof}

\section{Large Banach spaces}\label{Sec: large}
In this last section we shall concentrate our attention on Banach spaces whose density character is at least equal to $\mathfrak{c}^+$ and we shall show that the results presented in the previous sections do not extend to this setting; in particular, we shall prove Theorem C (see Corollary \ref{Cor: large biorth}). We need one definition before explaining the results (see \cite[p. 254]{CN}).

\begin{definition} We say that a cardinal number $\kappa$ is \emph{strongly $\omega_1$-inaccessible}, and we write $\omega_1\Lt \kappa$, if $\alpha^\omega<\kappa$ whenever $\alpha<\kappa$.
\end{definition}

For example, we have $\omega_1\Lt\mathfrak{c}^+$ and, more generally, an infinite cardinal number $\kappa$ satisfies $\omega_1\Lt\kappa^+$ if and only if $\kappa^\omega=\kappa$. In particular, $\omega_1\Lt(2^\kappa)^+$, for every infinite cardinal number $\kappa$. Finally, let us mention that, subject to the Generalised Continuum Hypothesis, an infinite cardinal $\kappa$ satisfies $\omega_1\Lt\kappa^+$ if and only if ${\rm cf}\,\kappa>\omega$ (see, \emph{e.g.}, \cite[Theorems 3.11 and 5.15(iii)]{Jech}).

\begin{theorem}\label{Th: large disjoint support} Let $\Gamma$ be a cardinal number such that $\omega_1\Lt\Gamma$ and $X$ be a Banach space with ${\rm dens}\,X=\Gamma$ that admits an M-basis. Also let $Z$ be a subspace of $X$ with $|Z|\geq \Gamma$. Then, every maximal, disjointly supported family of unit vectors in $Z$ has cardinality $\Gamma$.

In particular, the result is true for every dense subspace $Z$ of $X$, or, more generally, for every subspace $Z$ such that ${\rm dens}\,Z=\Gamma$.
\end{theorem}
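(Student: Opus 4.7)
My plan is to argue by contradiction. Suppose $\{z_i\}_{i\in I}$ is a maximal disjointly supported family of unit vectors in $Z$ with $|I|<\Gamma$, fix an M-basis $\{e_\alpha;e^*_\alpha\}_{\alpha\in\Gamma}$ of $X$ (with respect to which the supports are computed), and set $S:=\bigcup_{i\in I}{\rm supp}\,z_i\subseteq\Gamma$. The goal will be to exhibit a nonzero $z\in Z$ with ${\rm supp}\,z\cap S=\emptyset$; normalising and adjoining such a $z$ to the family will then contradict maximality.

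The linchpin will be an auxiliary observation that holds for \emph{any} M-basis $\{e_\alpha;e^*_\alpha\}$ in any Banach space: every vector $z\in X$ has \emph{countable} support. Indeed, since $\overline{\rm span}\{e_\alpha\}_{\alpha\in\Gamma}=X$, one approximates $z$ in norm by a sequence $y_n=\sum_{\alpha\in F_n}c^n_\alpha\,e_\alpha$ with each $F_n$ finite; setting $F:=\bigcup_n F_n$, a countable subset of $\Gamma$, biorthogonality gives $\langle e^*_\beta,y_n\rangle=0$ for every $n$ and every $\beta\notin F$, and continuity of $e^*_\beta\in X^*$ forces $\langle e^*_\beta,z\rangle=0$. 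Hence ${\rm supp}\,z\subseteq F$ is countable. Applied to each $z_i$, this yields $|S|<\Gamma$, using that $\omega<\Gamma$ (which follows from $\omega_1\Lt\Gamma$) together with $|I|<\Gamma$.

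The next step will be to consider the evaluation map $\Phi_S\colon Z\to\mathbb{K}^S$ given by $z\mapsto(\langle e^*_\alpha,z\rangle)_{\alpha\in S}$. If $\Phi_S$ failed to be injective, any nonzero element of $\ker\Phi_S$ would have support disjoint from $S$, supplying the vector contradicting maximality; hence $\Phi_S$ must be injective. By the countable support observation, each $\Phi_S(z)$ is a countably supported function on $S$, so $\Phi_S(Z)\subseteq\Sigma:=\{f\in\mathbb{K}^S:|{\rm supp}\,f|\leq\omega\}$, and a direct count gives $|\Sigma|\leq |[S]^{\leq\omega}|\cdot\mathfrak{c}\leq|S|^\omega\cdot\mathfrak{c}$. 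Now $|S|<\Gamma$ together with $\omega_1\Lt\Gamma$ yields $|S|^\omega<\Gamma$ directly from the definition, and applying the definition at $\omega<\Gamma$ gives $\mathfrak{c}=\omega^\omega<\Gamma$ as well; consequently $|Z|\leq|\Sigma|<\Gamma$, contradicting $|Z|\geq\Gamma$. The ``in particular'' assertions are immediate: any dense subspace $Z\subseteq X$ has ${\rm dens}\,Z={\rm dens}\,X=\Gamma$, and any subspace with ${\rm dens}\,Z=\Gamma$ automatically satisfies $|Z|\geq\Gamma$. I expect the main obstacle to lie purely in the cardinal-arithmetic bookkeeping at the end: the Banach-space content is exhausted by the short density argument for countability of supports, and the hypothesis $\omega_1\Lt\Gamma$ is precisely tailored to force $|\Sigma|<\Gamma$.
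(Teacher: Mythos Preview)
Your argument is correct and follows essentially the same route as the paper's proof: bound $|S|<\Gamma$ via countability of supports, consider the coordinate-evaluation map on $S$, and use $\omega_1\Lt\Gamma$ to force a nonzero kernel element contradicting maximality. The only cosmetic difference is that the paper normalises the dual functionals and maps into $c_0(S)$ (immediately giving $|c_0(S)|\leq|S|^\omega<\Gamma$), whereas you map into $\mathbb{K}^S$ and invoke the countable-support observation again to land in $\Sigma$; the resulting cardinal arithmetic is the same.
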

Evidently, Zorn's lemma then implies that there exist, in $Z$, disjointly supported families of unit vectors, of cardinality $\Gamma$.

\begin{proof} Fix an M-basis $\{e_\alpha;e^*_\alpha\}_{\alpha<\Gamma}$ for $X$ and assume, without loss of generality, that $\|e^*_\alpha\|=1$, for each $\alpha<\Gamma$. Also, let $\{z_\alpha\}_{\alpha<\gamma}$ be a collection of disjointly supported unit vectors in $Z$, with $|\gamma|<\Gamma$. We will prove that $\{z_\alpha\}_{\alpha<\gamma}$ is not maximal.\smallskip

Since, for each $\alpha<\gamma$, ${\rm supp}\,z_\alpha:=\{\beta<\Gamma\colon \langle e^*_\beta,z_\alpha \rangle\neq0\}$ is evidently a countable set, the set $S:=\cup_{\alpha<\gamma}{\rm supp}\,z_\alpha$ has cardinality $|S|\leq\max\{|\gamma|,\omega\}<\Gamma$. Consider then the bounded linear transformation $T\colon Z\to c_0(S)$ defined by
$$x\mapsto \{\langle e^*_\alpha,x\rangle\}_{\alpha\in S}.$$

On the one hand, we have $|c_0(S)|\leq |S|^\omega<\Gamma$, since $|S|<\Gamma$ and $\omega_1\Lt\Gamma$; on the other one, $|Z|\geq\Gamma$ by assumption. Consequently, the operator $T$ cannot be injective and we can select a unit vector $z_\gamma\in Z$ such that $Tz_\gamma=0$. In other words, the unit vectors $\{z_\alpha\}_{\alpha\leq\gamma}$ are disjointly supported, and we are done.
\end{proof}

\begin{remark} As it turns out, the condition $\omega_1\Lt\Gamma$ in Theorem \ref{Th: large disjoint support} is also necessary for the validity of the result. This can be seen from Corollary \ref{Cor: ortho large}, since it is proved in \cite{BCW} that the said corollary holds true precisely when  $\omega_1\Lt\Gamma$.
\end{remark}

Let us then turn to some consequences of the above result. For the sake of simplicity, we shall only state them for dense subspaces of a given Banach space $X$; however, as they only depend on the conclusion of the previous theorem, they are valid, more generally, for every subspace $Z$ such that $|Z|\geq{\rm dens}\,X$. Moreover, in each of them, the most interesting case is where ${\rm dens}\,X=\mathfrak{c}^+$.

\begin{corollary}\label{Cor: large biorth} Let $X$ be a Banach space with M-basis and such that $\omega_1\Lt{\rm dens}\,X$. Then every dense subspace $Z$ of $X$ contains a biorthogonal system of cardinality ${\rm dens}\,X$.
\end{corollary}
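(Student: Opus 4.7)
The plan is to invoke Theorem \ref{Th: large disjoint support} (combined with Zorn's lemma, as noted in the discussion following its statement) to extract a disjointly supported family of unit vectors in $Z$ of full cardinality $\Gamma:={\rm dens}\,X$, and then to manufacture the biorthogonal functionals by picking, for each such vector, a single coordinate functional from the ambient M-basis, in the spirit of the proof of Lemma \ref{Lemma: Biort System in span of M-basis}.

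More precisely, I would fix an M-basis $\{e_\alpha;e^*_\alpha\}_{\alpha<\Gamma}$ for $X$ and observe that, since $Z$ is dense in $X$, one has ${\rm dens}\,Z=\Gamma$ and in particular $|Z|\geq\Gamma$. Theorem \ref{Th: large disjoint support} together with Zorn's lemma then yields a family $\{z_\alpha\}_{\alpha<\Gamma}\subseteq Z$ of unit vectors whose supports ${\rm supp}\,z_\alpha:=\{\beta<\Gamma\colon\langle e^*_\beta,z_\alpha\rangle\neq 0\}$ are pairwise disjoint. For each $\alpha<\Gamma$, since $z_\alpha\neq 0$ and the functionals $\{e^*_\beta\}_{\beta<\Gamma}$ separate points of $X$, we have ${\rm supp}\,z_\alpha\neq\emptyset$; choose any $\beta_\alpha\in{\rm supp}\,z_\alpha$ and set
$$\varphi_\alpha:=\frac{1}{\langle e^*_{\beta_\alpha},z_\alpha\rangle}\,e^*_{\beta_\alpha}\in X^*.$$
By construction $\langle\varphi_\alpha,z_\alpha\rangle=1$, whereas the disjointness of supports forces $\beta_\alpha\notin{\rm supp}\,z_{\alpha'}$ whenever $\alpha'\neq\alpha$, giving $\langle\varphi_\alpha,z_{\alpha'}\rangle=0$. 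Hence $\{z_\alpha;\varphi_\alpha\}_{\alpha<\Gamma}$ is a biorthogonal system in $Z$ of cardinality $\Gamma={\rm dens}\,X$, as desired.

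The substantive work is already carried out in Theorem \ref{Th: large disjoint support}; once it is available, this corollary is essentially immediate, the remaining step being entirely analogous to the argument in Lemma \ref{Lemma: Biort System in span of M-basis}. I do not expect any real obstacle: note in particular that, in contrast with that lemma (where elements have finite support), the vectors $z_\alpha$ here may have merely countable support, but this causes no difficulty because the disjointness of supports is exactly what is needed to ensure that a single coordinate functional $e^*_{\beta_\alpha}$ picked from ${\rm supp}\,z_\alpha$ is orthogonal to every other $z_{\alpha'}$.
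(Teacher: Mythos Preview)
Your proposal is correct and follows essentially the same route as the paper: obtain from Theorem \ref{Th: large disjoint support} (plus Zorn's lemma) a disjointly supported family $(z_\alpha)_{\alpha<\Gamma}$ of unit vectors in $Z$, pick $\beta_\alpha\in{\rm supp}\,z_\alpha$, and set $\varphi_\alpha:=\langle e^*_{\beta_\alpha},z_\alpha\rangle^{-1}e^*_{\beta_\alpha}$ to get the biorthogonal system. The paper's proof is just a terser version of exactly this argument.
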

This result strongly contrasts with Theorem B and it clarifies the already mentioned fact that Theorem B does not extend to larger cardinalities. Moreover, the result offers a simple instance of a situation where an uncountable biorthogonal system can be constructed, even in absence of completeness. Let us also observe that an alternative, direct argument could be obtained from the same argument as in Lemma \ref{Lemma: Biort System in span of M-basis}, upon replacing the use of the $\Delta$-system lemma with its general version, \cite[Lemma III.6.15]{kunen}.

\begin{proof} Given a disjointly supported collection of unit vectors $(z_\alpha)_{\alpha<{\rm dens}\,X}$ in $Z$, select, for each $\alpha<{\rm dens}\,X$, an ordinal $\beta_\alpha\in{\rm supp}\,z_\alpha$. Then
$$\left\{z_\alpha; \frac{1}{\langle e^*_{\beta_\alpha},z_\alpha\rangle}e^*_{\beta_\alpha} \right\}_{\alpha<{\rm dens}\,X}$$
is the desired biorthogonal system.
\end{proof}

In the next result, we are even able to construct an Auerbach system of cardinality equal to the density of the underlying Banach space. This complements the results obtained in \cite{HKR} and, in a sense, also confirms the intuition in \cite[Problem 294]{GMZ} that some unconditionality assumption might imply the existence of large Auerbach systems.
\begin{corollary} Let $X$ be a Banach space with a long suppression $1$-unconditional Schauder basis and such that $\omega_1\Lt{\rm dens}\,X$. Then every dense subspace $Z$ of $X$ contains an Auerbach system of cardinality ${\rm dens}\,X$.
\end{corollary}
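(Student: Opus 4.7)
The plan is to combine the disjointly supported family provided by Theorem \ref{Th: large disjoint support} with the suppression $1$-unconditionality of the basis to manufacture norm-one biorthogonal functionals of the Auerbach type.

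Concretely, let $\{e_\gamma\}_{\gamma<\Gamma}$ (with $\Gamma:=\mathrm{dens}\,X$) be the long suppression $1$-unconditional Schauder basis, and let $\{e_\gamma^*\}_{\gamma<\Gamma}$ be its biorthogonal functionals. For every $S\subseteq\Gamma$, the canonical coordinate projection $P_S\colon X\to X$, $P_S(\sum_\gamma a_\gamma e_\gamma)=\sum_{\gamma\in S}a_\gamma e_\gamma$, is well-defined and satisfies $\|P_S\|=1$; since a Schauder basis is in particular an M-basis for $X$, I may apply Theorem \ref{Th: large disjoint support} (which needs $\omega_1\Lt\Gamma$) to the dense subspace $Z$, obtaining a family $(z_\alpha)_{\alpha<\Gamma}$ of unit vectors in $Z$ whose supports $S_\alpha:=\mathrm{supp}\,z_\alpha$ (with respect to the basis) are pairwise disjoint.

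Next, for each $\alpha<\Gamma$ I pick, via the Hahn--Banach theorem, a functional $f_\alpha\in X^*$ with $\|f_\alpha\|=1$ and $\langle f_\alpha,z_\alpha\rangle=1$, and I set
$$z_\alpha^*:=P_{S_\alpha}^*f_\alpha\in X^*.$$
Suppression $1$-unconditionality yields $\|z_\alpha^*\|\leq\|P_{S_\alpha}\|\cdot\|f_\alpha\|\leq1$. Biorthogonality is then a direct verification: since $\mathrm{supp}\,z_\alpha\subseteq S_\alpha$, one has $P_{S_\alpha}z_\alpha=z_\alpha$, while the pairwise disjointness of the supports gives $P_{S_\alpha}z_\beta=0$ for $\beta\neq\alpha$; hence
$$\langle z_\alpha^*,z_\beta\rangle=\langle f_\alpha,P_{S_\alpha}z_\beta\rangle=\delta_{\alpha,\beta}.$$
The identity $\langle z_\alpha^*,z_\alpha\rangle=1$, together with $\|z_\alpha\|=1$, forces $\|z_\alpha^*\|\geq1$, so $\|z_\alpha^*\|=1$. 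Thus $\{z_\alpha;z_\alpha^*\}_{\alpha<\Gamma}$ is an Auerbach system in $Z$ of cardinality $\mathrm{dens}\,X$, as required.

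There is essentially no serious obstacle in this argument: the whole strength of the statement sits in Theorem \ref{Th: large disjoint support}, and the role of the suppression $1$-unconditionality assumption is merely to supply, via the norm-one projections $P_{S_\alpha}$, the correct normalisation of the functionals. The only mild point of care is to ensure that one uses the support with respect to the chosen Schauder basis itself (so that $P_{S_\alpha}z_\alpha=z_\alpha$); this is automatic because the $S_\alpha$ furnished by Theorem \ref{Th: large disjoint support} are defined exactly via the biorthogonal functionals of the M-basis one plugs into that theorem.
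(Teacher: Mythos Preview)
Your proof is correct and follows essentially the same route as the paper: apply Theorem~\ref{Th: large disjoint support} to obtain disjointly supported unit vectors $(z_\alpha)_{\alpha<\Gamma}$ in $Z$, then compose norming functionals for the $z_\alpha$ with the norm-one projections $P_{S_\alpha}$ to obtain the Auerbach functionals. Your $z_\alpha^*=P_{S_\alpha}^*f_\alpha$ is exactly the paper's $z_\alpha^*\circ P_{S_\alpha}$, and your explicit verification that $\|z_\alpha^*\|=1$ is a welcome addition.
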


\begin{proof} Let us denote $\Gamma:={\rm dens}\,X$ and fix a long suppression $1$-unconditional Schauder basis $(e_\alpha)_{\alpha<\Gamma}$ for $X$; thus, for every subset $A$ of $\Gamma$ there is a naturally defined non-expansive projection $P_A\colon X\to \overline{\rm span}(e_\alpha)_{\alpha\in A}$. Fixed a dense subspace $Z$ of $X$, according to Theorem \ref{Th: large disjoint support}, we may select a disjointly supported family $(z_\alpha)_{\alpha<\Gamma}$ of unit vectors in $Z$.

For each $\alpha<\gamma$, we set $S_\alpha:={\rm supp}\,z_\alpha\subseteq\Gamma$ and we find a norming functional $z^*_\alpha$ for $z_\alpha$. Evidently, $\langle z^*_\alpha\circ P_{S_\alpha},z_\beta\rangle=0$ for distinct $\alpha,\beta\in \Gamma$ and $\|z^*_\alpha\circ P_{S_\alpha}\|\leq1$; therefore,
$$\left\{z_\alpha;z^*_\alpha\circ P_{S_\alpha}\right\}_{\alpha< \Gamma}$$
is the desired Auerbach system in $Z$.
\end{proof}

In the case where the Banach space $X$ is $\ell_p(\Gamma)$ ($1\leq p<\infty$), or $c_0(\Gamma)$ and we apply Theorem \ref{Th: large disjoint support} to the canonical basis $(e_\alpha)_{\alpha<\Gamma}$ of the space, it is evident that every disjointly supported family of unit vectors is isometrically equivalent to $(e_\alpha)_{\alpha<\Gamma}$. Therefore, we arrive at the next consequence of our result.
\begin{corollary} Let $\Gamma$ be a cardinal number such that $\omega_1\Lt\Gamma$ and let $X$ be either $\ell_p(\Gamma)$ ($1\leq p<\infty$), or $c_0(\Gamma)$. Then for every two dense subspaces $Y$ and $Z$ of $X$ there are subspaces $Y_0$ of $Y$ and $Z_0$ of $Z$ with ${\rm dens}\,Y_0={\rm dens}\,Z_0=\Gamma$ that are isometric.
\end{corollary}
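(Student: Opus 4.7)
My plan is to apply Theorem \ref{Th: large disjoint support} independently to the dense subspaces $Y$ and $Z$, and then exploit the defining feature of the spaces $\ell_p(\Gamma)$ and $c_0(\Gamma)$: any normalised disjointly supported family is isometrically equivalent to a subfamily of the canonical basis. Both spaces obviously admit an M-basis (the canonical one), the hypothesis $\omega_1\Lt\Gamma$ is given, and any dense subspace has density $\Gamma$, whence cardinality at least $\Gamma$, so Theorem \ref{Th: large disjoint support} applies and produces disjointly supported families of unit vectors $(y_\alpha)_{\alpha<\Gamma}\subseteq Y$ and $(z_\alpha)_{\alpha<\Gamma}\subseteq Z$.

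Next, I would set $Y_0:=\mathrm{span}\{y_\alpha\}_{\alpha<\Gamma}$ and $Z_0:=\mathrm{span}\{z_\alpha\}_{\alpha<\Gamma}$ and define $T\colon Y_0\to Z_0$ by $T y_\alpha := z_\alpha$, extended linearly. The verification that $T$ is an isometry is a one-line computation: for any finite linear combination $y=\sum_{i=1}^n c_i y_{\alpha_i}$, the disjointness of the supports of $(y_{\alpha_i})$ (resp.\ $(z_{\alpha_i})$) and the assumption $\|y_{\alpha_i}\|=\|z_{\alpha_i}\|=1$ imply $\|y\|^p=\sum_i |c_i|^p=\|Ty\|^p$ in the $\ell_p$ case, and $\|y\|=\max_i |c_i|=\|Ty\|$ in the $c_0$ case. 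This step is routine.

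Finally, I need to check that ${\rm dens}\,Y_0={\rm dens}\,Z_0=\Gamma$; by symmetry it suffices to treat $Y_0$. For the lower bound, observe that any two distinct vectors from the normalised disjointly supported family $(y_\alpha)_{\alpha<\Gamma}$ are at distance $2^{1/p}$ (respectively $1$) from each other, so every dense subset of $Y_0$ contains at least $\Gamma$ points; hence $\mathrm{dens}\,Y_0\geq\Gamma$. For the upper bound, $Y_0$ has algebraic dimension $\Gamma$, and since $\omega_1\Lt\Gamma$ forces $\mathfrak{c}<\Gamma$, its cardinality equals $\Gamma$, so $\mathrm{dens}\,Y_0\leq|Y_0|=\Gamma$.

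I do not foresee a genuine obstacle: the hard work is hidden in Theorem \ref{Th: large disjoint support}, while the rest is the classical observation that $\ell_p$- and $c_0$-type norms are invariant under any relabelling of a disjointly supported normalised family. The only point requiring a small amount of care is the density computation, which uses the separation of the family and the cardinal arithmetic consequence $\mathfrak{c}<\Gamma$ of $\omega_1\Lt\Gamma$.
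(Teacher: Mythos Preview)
Your proposal is correct and follows exactly the paper's strategy: apply Theorem~\ref{Th: large disjoint support} to each dense subspace to extract a disjointly supported normalised family of size $\Gamma$, and then use that in $\ell_p(\Gamma)$ or $c_0(\Gamma)$ any such family is isometrically equivalent to the canonical basis. You have simply spelled out the isometry computation and the density-character verification (including the observation $\mathfrak{c}<\Gamma$) that the paper leaves implicit.
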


This corollary implies, in particular, that the stronger version of Theorem A that we proved in Theorem \ref{th: WLD small} cannot be extended to WLD Banach spaces of density $\mathfrak{c}^+$; in other words, the dense subspaces that we built in Theorem \ref{Th: WLD large} cannot be made as diverse as the ones in Theorem \ref{th: WLD small}.\smallskip

To conclude, let us also mention that Theorem \ref{Th: large disjoint support} above subsumes the result from \cite{BCW} that we already mentioned in Section \ref{Sec: IPS}, concerning the existence of orthonormal systems in inner product spaces.
\begin{corollary}[\cite{BCW}]\label{Cor: ortho large} Let $\Gamma$ be a cardinal number such that $\omega_1\Lt\Gamma$. Then every dense subspace of $\ell_2(\Gamma)$ contains an orthonormal system of cardinality $\Gamma$.
\end{corollary}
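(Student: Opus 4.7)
The plan is to derive this corollary as a direct consequence of Theorem \ref{Th: large disjoint support}, exploiting the fundamental fact that in $\ell_2(\Gamma)$ disjointness of support with respect to the canonical basis automatically coincides with orthogonality with respect to the inner product.

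First I would take $X = \ell_2(\Gamma)$ and let $\{e_\alpha\}_{\alpha<\Gamma}$ denote its canonical orthonormal basis. Being an unconditional Schauder basis with biorthogonal functionals $e^*_\alpha = \langle e_\alpha,\cdot\rangle$, it is in particular an M-basis for $X$, so the hypotheses of Theorem \ref{Th: large disjoint support} are met. Given a dense subspace $Z$ of $\ell_2(\Gamma)$, Zorn's lemma produces a maximal disjointly supported family $(z_\alpha)_{\alpha<\kappa}$ of unit vectors in $Z$ (supports being understood with respect to the canonical basis). By the remark immediately following the statement of Theorem \ref{Th: large disjoint support}, or equivalently by the theorem applied to such a maximal family, we obtain $\kappa = \Gamma$.

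The final step is to observe that in $\ell_2(\Gamma)$ every disjointly supported family of unit vectors is automatically orthonormal. Indeed, for distinct $\alpha,\beta<\Gamma$, if ${\rm supp}\,z_\alpha \cap {\rm supp}\,z_\beta = \emptyset$, then
\[
\langle z_\alpha, z_\beta\rangle = \sum_{\gamma<\Gamma} \langle e^*_\gamma, z_\alpha\rangle\cdot\langle e^*_\gamma, z_\beta\rangle = 0,
\]
since for every $\gamma<\Gamma$ at least one of the two factors vanishes. Combined with $\|z_\alpha\|=1$, this shows that $(z_\alpha)_{\alpha<\Gamma}$ is the desired orthonormal system of cardinality $\Gamma$ in $Z$.

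There is essentially no obstacle here: the entire content of the corollary is packaged inside Theorem \ref{Th: large disjoint support}, and the only additional ingredient is the elementary identification of disjoint support with orthogonality in an $\ell_2$ space. The cardinal arithmetic assumption $\omega_1\Lt\Gamma$ is used exclusively through its role in the cited theorem, namely to ensure $|c_0(S)|\leq |S|^\omega<\Gamma$ for any set $S$ indexing the already-constructed countably supported vectors.
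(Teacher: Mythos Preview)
Your argument is correct and matches the paper's intended derivation: the corollary is stated without an explicit proof, since it is meant to follow immediately from Theorem~\ref{Th: large disjoint support} together with the observation (made just before the preceding corollary) that in $\ell_2(\Gamma)$ a disjointly supported family of unit vectors is isometrically equivalent to the canonical basis, hence orthonormal.
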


{\bf Acknowledgements.} We are indebted to Tomasz Kochanek for pointing out to us the reference \cite{Gudder}. We also thank the anonymous referee for the helpful report.

\end{document}